\newcommand\isom{\cong}
\newcommand\bq{\begin{equation}}
\newcommand\eq{\end{equation}}
\newenvironment{subtheorem}[1]{%
  \def\subtheoremcounter{#1}%
  \refstepcounter{#1}%
  \protected@edef\theparentnumber{\csname the#1\endcsname}%
  \setcounter{parentnumber}{\value{#1}}%
  \setcounter{#1}{0}%
  \expandafter\def\csname the#1\endcsname{\theparentnumber.\Alph{#1}}%
  \ignorespaces
}{%
  \setcounter{\subtheoremcounter}{\value{parentnumber}}%
  \ignorespacesafterend
}
\newcounter{parentnumber}
\newtheorem{proposition}{Proposition}[section]
\newtheorem{theorem}[proposition]{Theorem}
\newtheorem{example}[proposition]{Example}
\newtheorem{lemma}[proposition]{Lemma}
\theoremstyle{definition}
\newtheorem{definition}[proposition]{Definition}
\newtheorem*{nn-definition}{Definition}
\theoremstyle{remark}
\newtheorem{remark}[proposition]{Remark}
\numberwithin{equation}{section}
\newcommand{\cut}[1]{}
\newcommand\hidden[1]{}
\newcommand{\bfG}{\mathbb{G}}
\newcommand{\PP}{\mathbb{P}}
\newcommand{\ZZ}{{\mathbb{Z}}}                                        %
\newcommand{\cO}{{\mathcal O}}                                        %
\newcommand{\Bl}{\operatorname{Bl}}                                   
\definecolor{pal1}{RGB}{215,48,39}
\definecolor{pal2}{RGB}{252,141,89}
\definecolor{pal3}{RGB}{254,224,144}
\definecolor{pal4}{RGB}{145,191,219}
\definecolor{pal5}{RGB}{69,117,180}
\pgfplotsset{
    contour/label node code/.code={
        \node{\pgfmathprintnumber{#1}\,ms};
    }
}
\title[Curves generating extremal rays in blowups of weighted projective planes]{Curves generating extremal rays in blowups of \\ weighted projective planes}
\author{Javier Gonz\'alez Anaya, Jos\'e Luis Gonz\'alez and Kalle Karu}
\address{
J. Gonz\'alez-Anaya, Department of Mathematics, University of British Columbia, 
  Vancouver, BC V6T1Z2, Canada.  \newline \indent
J.L. Gonz\'alez,  Department of Mathematics, University of California, Riverside,
  Riverside, CA 92521, United States.  \newline \indent
K. Karu,
Department of Mathematics, University of British Columbia, 
  Vancouver, BC V6T1Z2, Canada.} 
\email{jga@math.ubc.ca, jose.gonzalez@ucr.edu, karu@math.ubc.ca}
\thanks{The first author was partially supported by CONACyT scholarship 410172.    \\
\indent The second author was supported by the UCR Academic Senate.    \\
\indent The third author was supported by a NSERC Discovery grant.}
\begin{document}

\begin{abstract}
We consider blowups at a general point of weighted projective planes and, more generally, of toric surfaces with Picard number one. We give a unifying construction of negative curves on these blowups such that all previously known families appear as boundary cases of this. The classification consists of two classes of said curves, each depending on two parameters. Every curve in these two classes is algebraically related to other curves in both classes; this allows us to find their defining equations inductively. For each curve in our classification, we consider a family of blowups in which the curve defines an extremal class in the effective cone. We give a complete classification of these blowups into Mori Dream Spaces and non-Mori Dream Spaces. Our approach greatly simplifies previous proofs, avoiding positive characteristic methods and higher cohomology.
\end{abstract}
\maketitle
\setcounter{tocdepth}{1} 




\section{Introduction}

We work over an algebraically closed field $k$ of characteristic zero.

Let $X$ be the blowup of a weighted projective plane $\PP(a,b,c)$  at a general point $e$. More generally, we allow $X$ to be the blowup of a projective toric surface $X_\Delta$ defined by a triangle $\Delta$:
\[ X= \Bl_e X_\Delta.\]
Such an $X$ is a projective variety of Picard number $2$. Its Mori cone of curves is $2$-dimensional, generated by the class of the exceptional curve $E$ and another class $\gamma$, not necessarily rational, of non-positive self-intersection. If $\gamma$ can be chosen to be the class of an irreducible curve $C$ then we call this uniquely determined curve $C$ a negative curve in $X$. 
It is not known if every $X$ contains a negative curve. As an example, Kurano and Matsuoka \cite{KuranoMatsuoka} conjecture that $X=\Bl_e \PP(9,10,13)$ contains no negative curve. This conjecture, if true, would imply that the Mori cone of $X$ is not rational.  The existence and classification of negative curves is the first main topic of this article.

The existence of a negative curve is closely related to the Mori Dream Space (MDS) property of $X$. 
Recall that a projective variety is called a MDS if its Cox ring is a finitely generated $k$-algebra.
Cutkosky \cite{Cutkosky} shows that $X$ is a MDS if and only if it contains a negative curve $C$ and another curve $D$ disjoint from $C$. There is an extensive literature on proving that certain $X$ is a MDS (see, for example, \cite{Cowsik, Huneke, Cutkosky, Srinivasan}). There are also many proofs of the non-MDS property (for example, \cite{GNW, KuranoNishida, GK, He19, HKL, GGK1, GGK2}).   Our second goal is to unify and simplify many of these results by completely classifying if $X$ in a family is a MDS or a non-MDS.


\vspace{0.5mm}

We let $T\isom \bfG_m^2 \subset X_\Delta$ be the torus, and without loss of generality take $e=(1,1)\in T$. A negative curve in $X$ is defined by a polynomial $f(x,y) \in k[x^{\pm 1}, y^{\pm 1}]$ that vanishes to order $m$ at $e$, and whose Newton polygon lies in a translation and dilation of the triangle $\Delta$ with area $\leq \frac{m^2}{2}$.  By abuse of notation, we will define a negative curve to be a curve in the torus $T$, defined by an irreducible polynomial $f(x,y)$ that vanishes to order $m$ at $e$ and whose Newton polygon fits into {\em some} triangle of area $\leq \frac{m^2}{2}$. Given such a negative curve in the torus, its strict transform defines a negative curve in $X$ for suitable choices of the triangle $\Delta$. 


\vspace{0.5mm}

Having defined negative curves in the torus $T$, we can now try to classify them. For each $m>0$ there is, up to an automorphism of $T$, a finite number of negative curves. Using computer search one can find all these curves when $m$ is small \cite{Wes}. For $m=1$ there is a unique curve defined by $f(x,y)=1-y$. This curve with $m=1$ appeared in all examples of \cite{GNW, GK, He19, HKL}. Similarly, for $m=2$ there is a unique curve defined by $f(x,y)= 3-x-y-x^{-1}y^{-1}$. This case $m=2$ appeared in the examples of \cite{Srinivasan, KuranoNishida}.  For $m=3$ there are two non-isomorphic negative curves and for $m=4$ there are four. In \cite{GGK1, GGK2} we constructed two infinite families of negative curves, giving two non-isomorphic negative curves for each $m\geq 3$. 


\vspace{0.5mm}

All examples of negative curves mentioned above have the property that the Newton polygon of $f(x,y)$ lies in a triangle $\Delta$ that contains exactly ${m+1\choose 2}+1$ lattice points. Since vanishing to order $m$ at $e$ imposes ${m+1\choose 2}$ conditions, the existence of such $f(x,y)$ supported in the triangle is clear; its irreducibility still needs to be proved. There do exist negative curves whose supporting triangle $\Delta$ contains fewer than ${m+1\choose 2}+1$ lattice points. Kurano and Matsuoka \cite{KuranoMatsuoka} gave two such examples. These negative curves seem to be very exceptional and we will not have much to say about them in this article. As an example, if $\Bl_e \PP(9,10,13)$ contains a negative curve, then the curve must be of this exceptional type.


\vspace{0.5mm}

We will only consider negative curves defined by $f(x,y)$ supported in a triangle $\Delta$ with at least ${m+1\choose 2}+1$ lattice points. It  then follows from the uniqueness of the negative curve in $\Bl_e X_\Delta$ that the number of lattice points in $\Delta$ must be exactly ${m+1\choose 2}+1$. This assumption divides the problem of finding negative curves into two steps: first find a triangle of small area and many lattice points, then prove that the polynomial $f(x,y)$ supported in it is irreducible. 


\vspace{0.5mm}

In \cite{GGK1} we gave a criterion for irreducibility of $f(x,y)$ given its Newton polygon. Namely, if a triangle $\Delta$ has an edge whose only lattice points are its endpoints, and the Newton polygon of $f(x,y)$ lies in the triangle and contains the two vertices, then $f(x,y)$ is irreducible. We will impose this irreducibility condition on all negative curves that we study.


\vspace{0.5mm}

We can now state the main theorem in two parts.

\pagebreak

\begin{subtheorem}{proposition}
  \begin{theorem}\label{thm-main-A}
    Let $M,N\geq 0$, $K\geq 3$ be integers satisfying the equation
    \begin{equation} \label{eqn-MN} (M+N)^2 = KMN+1.\end{equation}
    To each such triple we associate an integral triangle $IT(M,N)$ and a rational triangle $RT(M,N)$ with vertices:
    \[ IT(M,N): \quad (0,0), (M+N,KN), (M,0), \]
    \[ RT(M,N): \quad (0,0), (M,M+N), \left(M-\frac{M+N}{K},0\right). \]
    Then, each of these triangles supports a polynomial $f(x,y)$ defining a negative curve that vanishes at $e$ to order $m=M+N$ in the integral case and $m=M$ in the rational case. The negative curves corresponding to $IT(M,N)$ for $M  \geq N > 0$ and $RT(M,N)$ for $M > N > 1$ are pairwise non-isomorphic.
  \end{theorem}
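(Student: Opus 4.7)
The proof splits into three steps: (I) verify the combinatorial properties of the two triangles; (II) construct the polynomial $f$ and prove irreducibility; (III) establish pairwise non-isomorphism.

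For (I), the equation $(M+N)^2=KMN+1$ forces the coprimalities $\gcd(M,N)=\gcd(M+N,K)=\gcd(M+N,KN)=1$, since any common factor on the left-hand side of the identity divides $(M+N)^2-KMN=1$. In particular the edge from $(0,0)$ to $(M+N,KN)$ of $IT(M,N)$, and the edge from $(0,0)$ to $(M,M+N)$ of $RT(M,N)$, each have no interior lattice points; this is precisely the hypothesis of the irreducibility criterion of \cite{GGK1}. Direct computation gives $\mathrm{area}(IT(M,N))=MKN/2=m^2/2-1/2$ with $m=M+N$, and $\mathrm{area}(RT(M,N))=M^2/2-1/(2K)$ with $m=M$; both are strictly less than $m^2/2$. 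Pick's theorem applied to $IT(M,N)$ using the boundary counts $M+1$, $N+1$, $2$ on its three edges yields exactly $\binom{m+1}{2}+1$ lattice points, and a horizontal-strip count delivers the same number for $RT(M,N)$.

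For (II), the space of polynomials supported on the $\binom{m+1}{2}+1$ lattice points has dimension $\binom{m+1}{2}+1$, and vanishing to order $m$ at $e=(1,1)$ imposes at most $\binom{m+1}{2}$ linear conditions, so a nonzero $f$ exists. To invoke \cite{GGK1} I additionally need the two endpoints of the thin edge to appear in the support of $f$ with nonzero coefficient. The intended route, as advertised in the introduction, is the inductive construction based on the Vieta involution $M\mapsto (K-2)N-M$ on the conic $(M+N)^2=KMN+1$ with $N,K$ fixed: this produces a strictly smaller solution $(M',N,K)$, and I would build $f_{IT}(M,N)$ and $f_{RT}(M,N)$ from the polynomials of the smaller solution via an explicit algebraic manipulation (monomial multiplication together with a shear in coordinates, possibly mixing the integral and rational cases), then verify by induction that the distinguished corner monomials persist. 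With the corners in place, \cite{GGK1} gives irreducibility, and combining with the area bound of (I) shows $f$ defines a negative curve in the torus.

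For (III), negative curves through $e$ are classified up to $\GL_2(\ZZ)$, the stabilizer of $e$ in the torus automorphism group, which acts on Newton polygons by integral linear substitutions. The Newton polygon of an $IT$-curve is the lattice triangle $IT(M,N)$ itself, whereas for $N>1$ the Newton polygon of an $RT$-curve is the lattice convex hull of the rational triangle $RT(M,N)$, which is a quadrilateral; the condition $N>1$ is precisely what prevents this lattice hull from collapsing to a triangle coinciding with some $IT$-curve. This shape distinction already separates the two families. Within each family, the multiplicity $m$ together with the $\GL_2(\ZZ)$-equivalence class of the Newton polygon (encoding edge vectors and lengths) recovers $(M,N)$ uniquely under the conventions $M\geq N>0$ and $M>N>1$.

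\textbf{Main obstacle.} The crux of the argument is step (II): the dimension count alone yields only \emph{some} nonzero $f$ supported on the triangle, whereas applying the irreducibility criterion requires the two specific corner monomials on the thin edge to appear with nonzero coefficient. Carrying out the inductive Vieta construction and tracking these corner coefficients through the recursion is the core technical content, and is exactly the algebraic relation between curves in the two classes that the introduction highlights.
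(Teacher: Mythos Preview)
Your step (I) matches the paper. The substantive divergence is in step (II): the paper does \emph{not} track corner coefficients through the Vieta recursion to get irreducibility. Instead it gives a direct, non-inductive argument (Proposition~2.2 and Lemmas~2.4--2.7). After a shear along the right edge of slope $K$, the triangle becomes one with base $[0,m]$ and apex at height $m$; an elementary row/column count shows its columns contain $1,1,2,\ldots,m$ lattice points. If one of the two corners $(0,0)$ or $(m,h)$ were absent from the support of $f$, the interpolation lemma would produce a curve of degree $m-1$ through the remaining $\binom{m+1}{2}$ points, and B\'ezout against the vertical lines through the columns of sizes $m,m-1,\ldots,2$ gives a contradiction. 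So the corners are forced, and the irreducibility criterion of \cite{GGK1} applies with no recursion needed.

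Your step (III), by contrast, has a genuine gap. The Newton polygon of an $RT$-curve is \emph{not} in general the convex hull of the lattice points in $RT(M,N)$: the paper notes explicitly that $\xi^{rat}_{8,3}$ omits the vertex $(5,0)$, so the clean ``triangle versus quadrilateral'' dichotomy you invoke does not hold. The paper's remedy is to show (Lemma~6.3) that the Newton polygon at least contains nonzero-length edge segments along all three sides of the rational triangle, so that the triangle can be reconstructed by extending the two edges adjacent to the long edge; a further delicate case analysis (properties (1)--(4) in Section~3, plus Lemma~6.4) is then needed to pin down which edge of the Newton polygon is the long one. Proving Lemma~6.3 (and also your unproved claim that the $IT$ Newton polygon equals $IT(M,N)$, which requires the third vertex $(M,0)$ to appear---Lemma~6.2) is exactly where the Vieta recursion enters. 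So the inductive construction you flagged as the ``main obstacle'' is indeed essential, but for step (III) rather than step (II).
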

  We omit $K$ from the notation $IT(M,N)$, $RT(M,N)$. When $M$ and $N$ are positive then $K$ is determined by them.  The statement about the curves being non-isomorphic means that there is no automorphism of the torus $T$ that carries one curve to another. An automorphism of the torus is given by an affine linear automorphism of the lattice $\ZZ^2$.
  
In Section~\ref{sec-rec} below we show that the polynomials $f(x,y)$ defining negative curves in  $IT(M,N)$ and $RT(M,N)$ satisfy algebraic relations. Using these relations we can compute the polynomials explicitly.
  
  \begin{theorem}\label{thm-main-B}
    Conversely, if a negative curve is defined by $f(x,y)$ vanishing at $e$ to order $m>0$ and supported in a triangle $\Delta$ that satisfies
    \begin{enumerate}[(a)]
    \item $\Delta$ contains at least  ${m+1\choose 2}+1$ lattice points and has area $\leq \frac{m^2}{2}$,
    \item $\Delta$ has two integral vertices $(0,0)$ and $(m,h)$ where $m$ and $h$ are relatively prime, and a possibly nonintegral vertex $(r,s)$ with $0<r< m$,
    \end{enumerate}
    then the negative curve is isomorphic to one defined in Theorem~\ref{thm-main-A}.
  \end{theorem}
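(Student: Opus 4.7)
The affine group $\GL_2(\ZZ)\ltimes\ZZ^2$ of integer affine transformations acts on Newton lattices without changing the curve defined by $f$ in the torus $T$: $\GL_2(\ZZ)$ is realized by torus automorphisms fixing $e=(1,1)$, and integer translations correspond to multiplying $f$ by a monomial. Using this freedom, I translate a vertex of $\Delta$ to the origin and then apply an element of $\GL_2(\ZZ)$ to send the (possibly non-integral) third vertex $(r,s)$ to some $(\rho,0)$ with $\rho>0$. The first coordinate $m$ of the off-axis integral vertex is preserved by choosing the correct shear; this is possible thanks to the divisibility conditions forced by $\gcd(m,h)=1$ together with $0<r<m$. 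The triangle is thus brought to the canonical form $(0,0)$, $(m,\nu)$, $(\rho,0)$ with $\gcd(m,\nu)=1$ and $0<\rho<m$.

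\textbf{Main combinatorial step.} Uniqueness of the negative curve forces $\Delta$ to contain exactly $\binom{m+1}{2}+1$ lattice points, and the negative-curve condition gives the area bound $\rho\nu\le m^2$. In the lattice case $\rho\in\ZZ$, Pick's theorem applied to the boundary contribution of the two non-primitive edges yields the identity
\[\rho\nu+\rho+\gcd(m-\rho,\nu)=m^2+m-1.\]
Since $\rho+\gcd(m-\rho,\nu)\le m$ and $\rho\nu\le m^2$, this identity saturates in one of two ways. The sub-case $\rho\nu=m^2$ is ruled out: $\nu\mid m^2$ together with $\gcd(m,\nu)=1$ forces $\nu=1$, hence $\rho=m^2\ge m$, contradicting $\rho<m$. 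Therefore $\rho\nu=m^2-1$ and $(m-\rho)\mid\nu$. Setting $M=\rho$, $N=m-\rho$, $K=\nu/N$ yields the Diophantine relation $(M+N)^2=KMN+1$ and realizes $\Delta$ as $IT(M,N)$. In the rational case $\rho=p/q$ with $q\ge 2$ and $\gcd(p,q)=1$, I scale $\Delta$ by $q$ to produce a lattice triangle and count the lattice points of $\Delta$ as those of $q\ZZ^2$ inside the scaled triangle. A parallel tight-inequality argument then yields $m^2-\rho\nu=1/K$ with $K=q$, matching $RT(M,N)$ with corresponding parameters.

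\textbf{Conclusion and main obstacle.} Once the shape of $\Delta$ is determined, the polynomial $f(x,y)$ is fixed up to scalar by its vanishing conditions at $e$, and irreducibility follows from the criterion of \cite{GGK1} applied to the primitive edge $(0,0)$--$(m,\nu)$. The negative curve is therefore the one constructed in Theorem~\ref{thm-main-A}. The principal obstacle is the rational-vertex case: Pick's theorem does not apply to rational polygons directly, and identifying the denominator $q$ of $\rho$ with the parameter $K$ in the $RT$ family requires a careful count of $q\ZZ^2$-lattice points in the scaled triangle, where the characteristic $1/K$ deficit (as opposed to the integer $1$ deficit of the lattice case) emerges naturally from the interaction of the non-lattice vertex with the two lattice edges meeting it.
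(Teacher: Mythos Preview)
Your argument has two genuine gaps, both in places you flag as routine.

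\textbf{The normalization step is circular.} You claim that a suitable element of $\GL_2(\ZZ)$ sends $(r,s)$ to the $x$-axis while keeping the first coordinate of $(m,h)$ equal to $m$. Any such map has first row $(1+th,-tm)$ for some $t\in\ZZ$, and if additionally its second row kills $(r,s)$, one checks that the only possibility with $0<r<m$ preserved is $t=0$, i.e.\ a vertical shear $(x,y)\mapsto(x,y-ax)$; this sends $(r,s)$ to the axis precisely when $s/r\in\ZZ$. Nothing in hypotheses (a)--(b) gives you this: the condition that the edge from $(0,0)$ toward $(r,s)$ have integral slope is exactly what must be \emph{proved}. In the paper this is the content of the lemma that the lattice perimeter of $Z=\operatorname{conv}(\Delta\cap\ZZ^2)$ equals $m+1$, which forces every lower edge of $Z$ to have integral slope and only then licenses the normalizing shear. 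Your appeal to ``divisibility conditions forced by $\gcd(m,h)=1$ together with $0<r<m$'' does not supply this.

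\textbf{The rational case is not an argument.} Scaling $\Delta$ by $q$ produces a lattice triangle, but the quantity you need is the number of $\ZZ^2$-points of $\Delta$, which becomes the number of $q\ZZ^2$-points of $q\Delta$; Pick's theorem says nothing about that. More seriously, you assert that the denominator $q$ of $\rho$ coincides with the slope parameter $K$, but this is exactly the delicate point. The paper handles it by showing that $Z$ is either a triangle or a $4$-gon whose three lower edges have slopes $0,L,K$ with $L\in\{1,K-1\}$ and the middle edge of lattice length one; ruling out longer chains of edges and the spurious case $L=2,K=4$ requires separate area and coprimality arguments that your ``parallel tight-inequality'' does not reproduce.

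Once the normalization is in hand, your integral-case computation via Pick (the identity $\rho\nu+\rho+\gcd(m-\rho,\nu)=m^2+m-1$ and its saturation) is correct and is essentially the paper's Lemma~\ref{lem-slopes} specialized to the triangle case. The missing ingredient in both gaps is the same idea the paper uses throughout: work with the lattice polygon $Z$ rather than with $\Delta$, so that Pick applies unconditionally and the structure of the lower boundary is forced before you attempt to normalize.
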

\end{subtheorem}

The theorem classifies all negative curves where the triangles $\Delta$ satisfy conditions (a) and (b). As explained above, condition (a) removes the negative curves of exceptional kind. Condition (b) is used to prove irreducibility of $f(x,y)$. 

All solutions to equation (\ref{eqn-MN}) can be written down explicitly.

\begin{theorem} \phantomsection\label{thm-rec}
Let  $K\geq 3$ be fixed. There are three involutions on the set of integral solutions of  (\ref{eqn-MN}):
\begin{eqnarray*} 
 \iota_0  &: &  (M,N) \mapsto (N,M),\\
  \iota_1  &: &  (M,N) \mapsto (M, (K-2)M-N)\\
 \iota_2  &: &  (M,N) \mapsto ((K-2)N-M,N).
 \end{eqnarray*}
 These involutions satisfy the relation $\iota_2 = \iota_0 \iota_1\iota_0$. See Figure \ref{involutions}. 
 All integral non-negative solutions of equation (\ref{eqn-MN}) can be obtained by starting with $(0,1)$ and applying $\iota_0$ and $\iota_1$ alternately:
 \[ (0,1) \stackrel{\iota_0}{\longmapsto} (1,0) \stackrel{\iota_1}{\longmapsto} (1,K-2) \stackrel{\iota_0}{\longmapsto} (K-2,1)  
\stackrel{\iota_1}{\longmapsto} (K-2, (K-2)^2-1) \stackrel{\iota_0}{\longmapsto} \ldots.\]
\end{theorem}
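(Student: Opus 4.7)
The plan is to verify the involutions and the relation by Vieta-jumping computations, then prove completeness of the enumeration by descent on $M+N$.

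First, I would rewrite \eqref{eqn-MN} as the monic quadratic
\[ N^2 - (K-2)MN + (M^2 - 1) = 0 \]
in $N$ with integer coefficients. If $N$ is one integer root, then Vieta's formulas give the other as $(K-2)M - N$, so $\iota_1$ sends integer solutions to integer solutions and is manifestly an involution. The same reasoning applied to the quadratic in $M$ yields $\iota_2$; the map $\iota_0$ is trivially an involution. The relation $\iota_2 = \iota_0 \iota_1 \iota_0$ then follows by direct unwinding:
\[ \iota_0\iota_1\iota_0(M,N) = \iota_0 \iota_1 (N,M) = \iota_0(N, (K-2)N - M) = ((K-2)N - M, N). \]

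For the enumeration, set $v_0 = (0,1)$ and define $v_i$ for $i \geq 1$ alternately by $v_i = \iota_0(v_{i-1})$ for odd $i$ and $v_i = \iota_1(v_{i-1})$ for even $i$. Each $v_i$ is a non-negative integer solution by construction, and the first few terms match the list in the statement. I would show every non-negative integer solution equals some $v_i$ by strong induction on $M+N$. If $MN = 0$, the equation collapses to a square equal to $1$, giving $(M,N) \in \{v_0, v_1\}$. Otherwise $M, N \geq 1$, and after applying $\iota_0$ if needed we may assume $M \geq N$. If $N = 1$, the equation reduces to $M(M-(K-2))=0$, forcing $(M,N)=(K-2,1)=v_3$. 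If $N \geq 2$, Vieta gives $\iota_2(M,N)=(M',N)$ with $MM'=N^2-1 > 0$; the bounds $M \geq N \geq 2$ then yield $0 < M' < N$, so $(M',N)$ is a strictly smaller non-negative solution to which induction applies.

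To identify $(M,N)$ with a specific $v_i$ rather than merely a group-orbit element, I note that each descent step is realized by an element of $\langle \iota_0, \iota_1 \rangle$, so every non-negative solution lies in the $\langle \iota_0, \iota_1 \rangle$-orbit of $v_0$. Since this group is a quotient of the infinite dihedral group, every orbit element is $w \cdot v_0$ for an alternating word $w$ in $\iota_0, \iota_1$. Words whose rightmost letter is $\iota_0$ produce precisely the list $v_1, v_2, \ldots$ by construction, while words ending in $\iota_1$ are applied to $\iota_1 v_0 = (0,-1)$; a short induction shows that subsequent alternating applications of $\iota_0, \iota_1$ to $(0,-1)$ keep both coordinates non-positive with at least one strictly negative, so this branch contributes no non-negative solution.

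The main obstacle I anticipate is this last bookkeeping step: ruling out the ``negative branch'' of the orbit and confirming that the Vieta-descent orbit matches the explicit sequence $(v_i)$ rather than a strictly larger set. This requires a little care when $K = 3$, where the orbit becomes finite because $(1,1) = v_2$ is fixed by $\iota_0$ and the sequence starts repeating, but the sign analysis on the negative branch still applies uniformly for all $K \geq 3$.
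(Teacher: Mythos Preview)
Your proposal is correct and follows essentially the same descent argument as the paper: verify that the maps are involutions on the solution set, then reduce any non-negative solution to $(0,1)$ by repeatedly applying $\iota_0$ and $\iota_1$ (equivalently $\iota_2$) so that $M+N$ strictly decreases. The paper phrases the descent step geometrically (the solutions lie on two hyperbola branches exchanged by the involutions, and $\iota_1$ on the $M<N$ branch lowers $N$), whereas you phrase it via Vieta's formulas ($MM'=N^2-1$ forces $0<M'<N$); these are the same computation, and your extra orbit-matching paragraph is more bookkeeping than the paper supplies but is not needed beyond noting that reversing the descent steps already gives an alternating word in $\iota_0,\iota_1$.
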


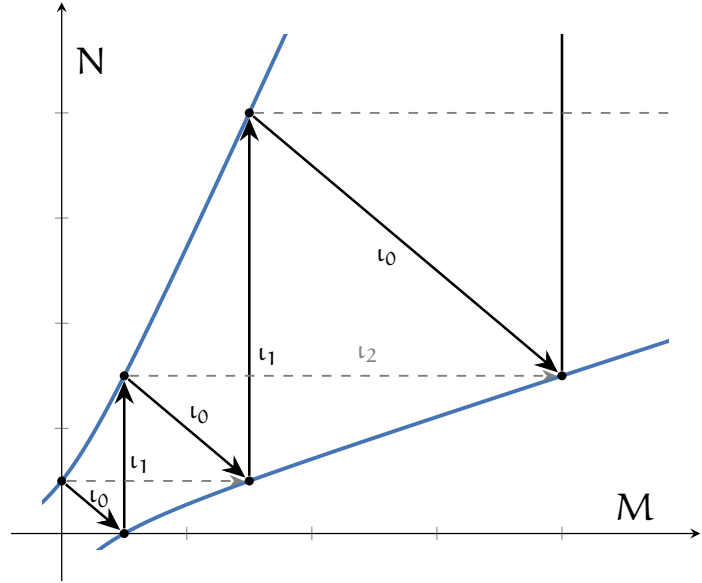
\begin{figure}
  \centering
  \vspace{\baselineskip}
  \scalebox{1.2}{ 
    \begin{tikzpicture}[
  trim axis left,
  trim axis right
  ]
  \begin{axis}[
    title={},
    xmin=-0.3,
    ymin=-0.3,
    xmax=9.7,
    ymax=9.5,
    axis x line=center, axis y line=center,
    enlargelimits=false,
    minor tick num=0,
    axis line style={shorten >=-10pt, shorten <=-10pt},
    xlabel=$M$,
    ylabel=$N$,
    xticklabels=\empty,
    yticklabels=\empty,
    ]
    \addplot +[pal5,no markers,
      raw gnuplot,
      very thick,
      empty line = jump 
      ] gnuplot {
      set contour base;
      set cntrparam levels discrete 0.0003;
      unset surface;
      set view map;
      set isosamples 200;
      set samples 200;
      splot (x+y)^2-5*x*y-1;
    };
    \node[label={180:{\tiny $1$}},circle,fill,inner sep=1pt] at (axis cs:0,1) (A) {};
    \node[label={135:{\tiny}},circle,fill,inner sep=1pt] at (axis cs:1,3) (C) {};
    \node[label={135:{\tiny}},circle,fill,inner sep=1pt] at (axis cs:3,8) (E) {};
    \node[label={135:{\tiny}},circle,fill,inner sep=1pt] at (axis cs:8,12) (G) {};
    \node[label={270:{\tiny $1$}},circle,fill,inner sep=1pt] at (axis cs:1,0) (B) {};
    \node[label={315:{\tiny}},circle,fill,inner sep=1pt] at (axis cs:3,1) (D) {};
    \node[label={315:{\tiny}},circle,fill,inner sep=1pt] at (axis cs:8,3) (F) {};

    \node[label={45:{\tiny $\iota_0$}}] at (axis cs:0,0) () {};
    \node[label={45:{\tiny $\iota_1$}}] at (axis cs:0.65,0.65) () {};
    \node[label={45:{\tiny $\iota_0$}}] at (axis cs:1.6,1.6) () {};
    \node[label={45:{\tiny $\iota_1$}}] at (axis cs:2.7,2.7) () {};
    \node[label={45:{\tiny $\iota_0$}}] at (axis cs:4.6,4.6) () {};
    \draw [-{Stealth[scale=1.1,angle'=45]},thick] (A) -- (B);
    \draw [-{Stealth[scale=1.1,angle'=45]},thick] (B) -- (C) node[midway,right] {\tiny};
    \draw [-{Stealth[scale=1.1,angle'=45]},thick] (C) -- (D) node[midway,above] {\tiny};
    \draw [-{Stealth[scale=1.1,angle'=45]},thick] (D) -- (E) node[midway,right] {\tiny};
    \draw [-{Stealth[scale=1.1,angle'=45]},thick] (E) -- (F) node[midway,above] {\tiny};
    \draw [-{Stealth[scale=1.1,angle'=45]},thick] (F) -- (G) node[midway,right] {\tiny};

    \draw [-{Stealth[gray,scale=0.8,angle'=45]},gray,semithick,dashed] (A) -- (D) node[midway,above right] {\tiny};
    \draw [-{Stealth[gray,scale=0.8,angle'=45]},gray,semithick,dashed] (C) -- (F) node[midway,above right] {\tiny $\iota_2$};
    \draw [-{Stealth[gray,scale=0.8,angle'=45]},gray,semithick,dashed] (E) -- (axis cs:12,8) node[midway,above right] {\tiny};
  \end{axis}
\end{tikzpicture}

  }
  \caption{Involutions generating all integral solutions to  $(M+N)^2=KMN+1$ as described in Theorem \ref{thm-rec}}
  \label{involutions}
\end{figure}

One can see from the previous theorem that for $K=3$ there are three solutions $(0,1), (1,0), (1,1)$.  For $K\geq 4$ there is an infinite sequence of solutions. The integral and rational cases then give us two $2$-parameter families of distinct negative curves. 

For $K\geq 4$, the solutions $(M,N)$ can be divided into two sets, $M>N$ and $M<N$. The triangles in the two sets are isomorphic.  More precisely, there exist affine linear automorphisms of the lattice $\ZZ^2$ that induce isomorphisms of triangles:
\[ IT(M,N) \isom IT(\iota_0(M,N)),\]
\[ RT(M,N) \isom RT (\iota_1(M,N)).\]

\begin{example}\label{ex-families}
\begin{enumerate}
\item The triples $M\geq N=1$, $K=M+2$ are  solutions of (\ref{eqn-MN}). The corresponding rational triangles have vertices $(0,0), (M, M+1), (M-1+1/K,0)$. This is the family of negative curves studied in \cite{GGK1}.
\item When $K=4$ then $M\geq 2, N=M-1$ are solutions of  (\ref{eqn-MN}). The corresponding rational triangles  have vertices $(0,0), (M, 2M-1), \left(M-\frac{2M-1}{4}, 0\right)$. This is the family of negative curves studied in \cite{GGK2}.
\item The integral triangle corresponding to $K=4, M=3, N=2$ is the simplest one that does not appear in either of the two families in \cite{GGK1, GGK2}. It has vertices $(0,0), (5,8), (0,3)$ and $m=5$.
\item Let us look for negative curves with $m=4$. There is one integral triangle $IT(3,1)$, $K=5$, and one rational triangle $RT(4,3)$, $K=4$. There are two additional negative curves with $m=4$, see Figure \ref{m4}.
\end{enumerate}
\begin{figure}
  \centering
  \vspace{0.4cm}
  \begin{subfigure}[b]{0.4\textwidth}
  \centering
  \scalebox{0.8}{
  \begin{tikzpicture}[trim axis left,
    trim axis right
    ]
    \begin{axis}[
    title={},
    xmin=-0.2,
    ymin=-0.2,
    xmax=4.1,
    ymax=5.1,
    axis x line=center, axis y line=center,
    enlargelimits=true,
    minor tick num=0,
    axis line style={shorten >=-10pt, shorten <=-10pt},
    xlabel=$x$,
    ylabel=$y$,
    xticklabels=\empty,
    yticklabels=\empty,
    ]
    
    \foreach \x/\y in {0/0,1/0,2/0,3/0,1/1,2/1,2/2,3/1,3/2,3/3,4/5}
    {
      \edef\temp{\noexpand\draw[fill] (axis cs:\x,\y) circle (1.6pt);}
      \temp
    }
    \draw (axis cs:0,0) -- (axis cs:4,5) -- (axis cs: 3,0) -- cycle;
    \node[label={90:{\tiny $(4,5)$}},circle,fill,inner sep=1pt] at (axis cs:4,5) () {};
  \end{axis}
\end{tikzpicture}
}
\vspace{0.2cm}
\caption[Ex2]%
{$IT(3,1)$ with $K=5$, member of the family from \cite{GGK1} with $C\cdot C=-1$.}    
\end{subfigure}
\vspace{0.5cm} 
\hspace{1cm}
\begin{subfigure}[b]{0.4\textwidth}  
  \centering
  \scalebox{0.8}{
    
  \begin{tikzpicture}[
  trim axis left,
  trim axis right
  ]
  \begin{axis}[
    title={},
    xmin=-0.2,
    ymin=-0.2,
    xmax=4.1,
    ymax=7.1,
    axis x line=center, axis y line=center,
    enlargelimits=true,
    minor tick num=0,
    axis line style={shorten >=-10pt, shorten <=-10pt},
    xlabel=$x$,
    ylabel=$y$,
    xticklabels=\empty,
    yticklabels=\empty,
    ]
    
    \foreach \x/\y in {0/0,1/0,2/0,1/1,2/1,2/2,2/3,3/3,3/4,3/5,4/7}
    {
      \edef\temp{\noexpand\draw[fill] (axis cs:\x,\y) circle (1.6pt);}
      \temp
    }
    \draw (axis cs:0,0) -- (axis cs:4,7) -- (axis cs: 2.25,0) -- cycle;
    \node[label={90:{\tiny $(4,7)$}},circle,fill,inner sep=1pt] at (axis cs:4,7) () {};
    \node[label={270:{\tiny $(9/4,0)$}}] at (axis cs:9/4,0) () {};
  \end{axis}
\end{tikzpicture}
}
\vspace{0.2cm}
  \caption[]%
{$RT(4,3)$ with $K=4$, member of the family from \cite{GGK2} with $C\cdot C=-\frac{1}{4}$.}    
\end{subfigure}
\begin{subfigure}[b]{0.4\textwidth}   
  \centering
  \scalebox{0.8}{

  \begin{tikzpicture}[
    trim axis left,
    trim axis right
    ]
    \begin{axis}[
      title={},
      xmin=-0.2,
      ymin=-0.2,
      xmax=5.1,
      ymax=7.2,
      axis x line=center, axis y line=center,
      enlargelimits=true,
      minor tick num=0,
      axis line style={shorten >=-10pt, shorten <=-10pt},
      xlabel=$x$,
      ylabel=$y$,
      xticklabels=\empty,
      yticklabels=\empty,
    ]
    
    \foreach \x/\y in {0/0,1/0,2/0,1/1,2/1,2/2,3/2,3/3,3/4,4/5,5/7}
    {
      \edef\temp{\noexpand\draw[fill] (axis cs:\x,\y) circle (1.6pt);}
      \temp
    }
    \draw (axis cs:0,0) -- (axis cs:5,7) -- (axis cs: 2.2,0) -- cycle;
    \node[label={90:{\tiny $(5,7)$}}] at (axis cs:5,7) () {};
    \node[label={270:{\tiny $(11/5,0)$}}] at (axis cs:2.2,0) () {};
  \end{axis}
  \end{tikzpicture}
}
\vspace{0.2cm}
  \caption[]%
  {Negative curve in $\operatorname{Bl}_e\mathbb{P}(5,7,11)$ with $C\cdot C=-\frac{3}{5}$.}
\end{subfigure}
\hspace{1cm}
\begin{subfigure}[b]{0.4\textwidth}   
  \centering
  \scalebox{0.8}{
  \begin{tikzpicture}[
    trim axis left,
    trim axis right
    ]
    \begin{axis}[
    title={},
    xmin=-0.2,
    ymin=-0.2,
    xmax=4.6,
    ymax=14.2,
    axis x line=center, axis y line=center,
    enlargelimits=true,
    minor tick num=0,
    axis line style={shorten >=-10pt, shorten <=-10pt},
    xlabel=$x$,
    ylabel=$y$,
    xticklabels=\empty,
    yticklabels=\empty,
    ]
    
    \foreach \x/\y in {0/0,1/0,1/1,1/2,1/3,2/4,2/5,2/6,3/9,3/10,2/4,4/13}
    {
      \edef\temp{\noexpand\draw[fill] (axis cs:\x,\y) circle (1.6pt);}
      \temp
    }
    \draw (axis cs:0,0) -- (axis cs:30/7,100/7) -- (axis cs: 10/9,0) -- cycle;
    \node[label={270:{\tiny $(10/9,0)$}}] at (axis cs:10/9,0) () {};
    \node[label={90:{\tiny $\frac{10}{7}(3,10)$}}] at (axis cs:30/7,13.6) () {};
    \node[label={0:{\tiny $(4,13)$}}] at (axis cs:4,13) () {};
  \end{axis}
\end{tikzpicture}
}
\vspace{0.2cm}
\caption[]%
{Negative curve in $\operatorname{Bl}_e\mathbb{P}(7,9,10)$ with $C\cdot C=-\frac{8}{63}$.}    
\end{subfigure}


  \caption{Every negative curve with $m=4$.}
  \label{m4}
\end{figure}
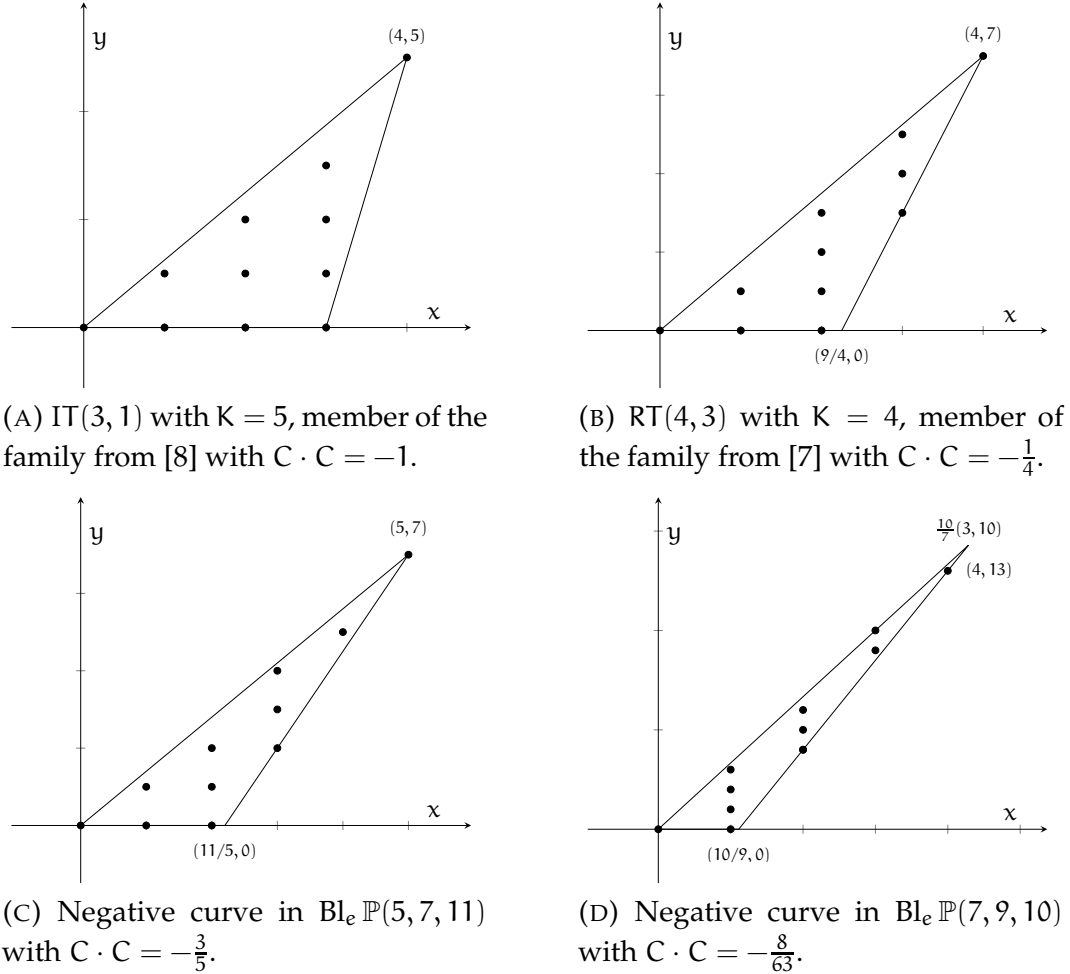
\end{example}

In \cite{GGK1, GGK2} we studied two families of negative curves (Example~\ref{ex-families} (1), (2) ) and for each such curve we constructed examples of MDS and non-MDS. We generalize these results to all negative curves described above.

Let $\Delta$ be one of the triangles $IT(M,N)$ or $RT(M,N)$ described in Theorem~\ref{thm-main-A}, and let $C^0\subseteq T$ be the corresponding negative curve. We consider larger triangles $\Delta_{\alpha,\beta}$ obtained from $\Delta$ by increasing its base:
  \[ IT(M,N)_{\alpha,\beta}: \quad (-\alpha,0), (M+N,KN), (M+\beta,0), \]
  \[ RT(M,N)_{\alpha,\beta}: \quad (-\alpha,0), (M,M+N), \left(M-\frac{M+N}{K}+\beta,0\right). \]
Here $\alpha,\beta\geq 0$. Let $X= \Bl_e X_{\Delta_{\alpha,\beta}}$. Then, the strict transform of $C^0$ is a negative curve in $X$ if $Area(\Delta_{\alpha,\beta}) \leq \frac{m^2}{2}$. This is equivalent to $\alpha+\beta \leq 1/NK$ in the integral case and $\alpha+\beta \leq 1/K(M+N)$ in the rational case.

\begin{theorem}\label{thm-MDS}
Let the variety $X$ be constructed from either $IT(M,N)$ or $RT(M,N)$ by choosing $\alpha,\beta\geq 0$ such that the strict transform of $C^0$ is a negative curve in $X$.
\begin{enumerate}
\item If $\alpha=0$ or $\beta=0$ then $X$ is a MDS.
\item If $\alpha>0$ and $\beta>0$ then $X$ is a non-MDS when $N>1$ in the integral case and $M+N>1$ in the rational case.
\end{enumerate}
\end{theorem}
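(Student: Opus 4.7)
Both parts reduce to Cutkosky's criterion \cite{Cutkosky}: $X$ is an MDS if and only if, besides the negative curve $C$ (the strict transform of $C^0$), it contains a second irreducible curve $D$ with $D \cdot C = 0$. Let $\pi : X \to X_{\Delta_{\alpha,\beta}}$ denote the blowup, $E$ the exceptional divisor, $L_{\alpha,\beta}$ the polarization on $X_{\Delta_{\alpha,\beta}}$ whose Newton polytope is $\Delta_{\alpha,\beta}$, and $D_L, D_R$ the torus-invariant prime divisors corresponding to the two non-base edges of $\Delta_{\alpha,\beta}$.

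For part~(1), by left-right symmetry it suffices to treat $\alpha = 0$. Then $\Delta_{0,\beta}$ retains $(0,0)$ as an integral vertex, and hypothesis (b) of Theorem~\ref{thm-main-B} guarantees that the top vertex $(m,h)$ satisfies $\gcd(m,h)=1$, so the left edge is primitive and $L_{0,\beta} \cdot D_L = 1$. A general section of $\mathcal{O}(mD_L)$ on $X_{\Delta_{0,\beta}}$ passing through $e$ with multiplicity one produces an effective divisor whose strict transform $D$ in $X$ has class $m\pi^{*}D_L - E$, and an expansion gives
\[ D \cdot C = (m\pi^*D_L - E) \cdot (\pi^*L_{0,\beta} - mE) = m(L_{0,\beta}\cdot D_L) + mE^2 = m - m = 0. \]
Cutkosky's criterion then yields the MDS property. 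The existence of such a section of $mD_L$ reduces to the inequality $h^0(X_{\Delta_{0,\beta}},mD_L) \geq 2$, which follows from a direct lattice-point count in the strip dual to $D_L$ once $(0,0)$ is an integral vertex of $\Delta_{0,\beta}$.

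For part~(2), I argue by contradiction: suppose $X$ is an MDS with both $\alpha,\beta > 0$. Then Cutkosky yields an irreducible $D$ of class $a\pi^*H - bE$, for $H$ an ample generator of $\mathrm{Pic}(X_{\Delta_{\alpha,\beta}})_\QQ$ and $a,b\in\QQ_{>0}$, with $D\cdot C=0$; the orthogonality fixes the slope $b/a = (H\cdot L_{\alpha,\beta})/m$. I would then translate effectivity of such a $D$ into a Nagata-type lattice-point inequality: a section of $aL_{\alpha,\beta}$ vanishing to order $b$ at $e$ exists only if $\#(\ZZ^2\cap a\Delta_{\alpha,\beta}) > \binom{b+1}{2}$. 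The role of the parameter conditions $N>1$ (integral) and $M+N>1$ (rational) is to ensure that neither non-base edge of $\Delta_{\alpha,\beta}$ admits a lattice endpoint enabling the short construction of part~(1); combined with the Diophantine identity (\ref{eqn-MN}), this forces the required slope into a range where no effective $D$ can exist.

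The main obstacle is part~(2): obtaining this lattice-point inequality sharply and uniformly in $(\alpha,\beta)$ for both families, and verifying that the recursion of Theorem~\ref{thm-rec} propagates the non-MDS obstruction through the entire two-parameter set of solutions to (\ref{eqn-MN}), rather than only within the sub-families of Example~\ref{ex-families} treated in \cite{GGK1, GGK2}. I expect to handle this by inducting along the involutions $\iota_0,\iota_1$ of Theorem~\ref{thm-rec}, using the algebraic relations between the polynomials $f(x,y)$ defining the negative curves (mentioned in Section~\ref{sec-rec}) to transport the Zariski-decomposition obstruction from one member of the family to its image under the involution, with the base cases $(0,1)$ and $(1,0)$ playing the role of the excluded boundary $N=1$ or $M+N=1$.
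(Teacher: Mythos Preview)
Your proposal has two substantive gaps.

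\textbf{Part (1): there is no left--right symmetry.} The two non-base edges of $\Delta_{\alpha,\beta}$ are not interchangeable: the left edge (from $(-\alpha,0)$ to the top vertex) has lattice length $1$ only when $\alpha=0$, whereas the right edge has integral slope $K$ and, in the integral case, lattice length $N$ even when $\beta=0$, while in the rational case its lower endpoint is not even a lattice point. So your construction of a curve in the class $m\pi^*D_L - E$ does not transfer to the right edge. The paper handles $\alpha=0$ essentially as you do (with the explicit section $x^m(1-y)^h$), but the $\beta=0$ case requires a genuinely different idea: the disjoint curve $D$ is built from the negative-curve polynomial of the \emph{other} family, namely $\big(\xi^{rat}_{\tau(M,N)}\big)^K$ in the integral case and $\xi^{int}_{M,N}$ in the rational case. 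This cross-relation between the two families is the content you are missing.

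\textbf{Part (2): your plan is both incomplete and unnecessarily indirect.} You propose a Nagata-type lattice-point count together with an induction along the involutions $\iota_0,\iota_1$, but you do not carry out either step, and you flag the uniformity in $(\alpha,\beta)$ as an open obstacle. The paper avoids all of this with a single direct argument (Lemma~\ref{lem-nonMDS}): assuming a disjoint $D$ exists, one writes its defining polynomial as $\zeta = c\,x^{\mu m}(1-y)^{\mu h} + \xi\, g$, uses $\beta>0$ to force the Newton polygon of $g$ into a triangle with base shortened by $1/K$, uses $\alpha>0$ together with a characteristic-zero observation on the left edge to see that $\xi\nmid g$, and then checks that the curve defined by $g$ has negative intersection with $C$ whenever $h(b+\tfrac{1}{K})>m^2$. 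Substituting the parameters of $IT(M,N)$ and $RT(M,N)$, this inequality becomes exactly $N>1$ and $M+N>1$, respectively. No induction, no recursion along $\iota_0,\iota_1$, and no case-by-case lattice-point estimate is needed.
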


\begin{remark}
The theorem applies with $M>N$ and with $M<N$. Even though these triangles are isomorphic, when choosing $\alpha,\beta\geq 0$ we lengthen different edges of the same triangle. 
\end{remark}

\begin{remark}
The negative curve $C$ is allowed to have self-intersection number zero. The theorem gives many examples of non-MDS where $C$ has zero self-intersection. 
\end{remark}

\begin{remark}
  The triangles $IT(K-2,1)$ and $RT(K-1,1)$ support the same negative curve: the former is the convex hull of the lattice points in the latter. When choosing a set of non-isomorphic curves in Theorem \ref{thm-main-A} we have discarded the triangles $RT(K-1,1)$. In contrast,  the second part of Theorem \ref{thm-MDS} applies to $RT(K-1,1)$, but not to $IT(K-2,1)$.
 These negative curves are studied in more detail in \cite{GGK1}. 
  \end{remark}

\begin{remark}
For small values of $M$ and $N$, the possibly degenerate triangles $IT(0,1)$, $IT(1,0)$, $RT(1,0)$, $RT(1,1)$  support the same  negative curve with vanishing order $m=1$. This case has been widely studied in the literature \cite{GNW,GK,KuranoNishida,HKL,He19}. The present paper does not say anything more about the $m=1$ case. 
\end{remark}

\section{Existence of negative curves}\label{dagger}

Recall that a negative curve in $X=\Bl_e X_\Delta$ is an irreducible curve $C$ of non-positive self-intersection, different from the exceptional curve $E$. If the curve $C$ has strictly negative self-intersection then it is unique in $X$.

Let us say that a triangle $\Delta$ supports a negative curve if $\Delta$ has area $\leq \frac{m^2}{2}$ and there exists a polynomial $f(x,y)\in k[x^{\pm 1}, y^{\pm 1}]$ with Newton polygon in $\Delta$ that vanishes to order $m$ at the point $e=(1,1)$. Such an $f(x,y)$ defines a curve $C$ in $X=\Bl_e X_\Delta$ with self-intersection number
\[ C\cdot C = 2 Area(\Delta) - m^2 \leq 0.\]
If $\Delta$ supports a negative curve with strictly negative self-intersection then the polynomial $f(x,y)$ is unique up to a constant multiple. This implies that $\Delta$ can contain at most ${ m+1 \choose 2} +1$ lattice points because vanishing at $e$ imposes ${ m+1 \choose 2}$ conditions. 

Let us define a form of triangles that will appear in the proofs below. The triangles $IT(M,N)$ and $RT(M,N)$ are of this form.

\begin{definition} We say that a triangle $\Delta$ is of the form $(\dagger)$ if it has vertices
\[ (0,0), (b,0)\text{ and } (m,h), \]
where $m, h >0 $ are relatively prime integers, $0<b < m$ is a rational number, and the slope of the right edge
\[ K = \frac{h}{m-b} \] 
is an integer. 
\end{definition}

The main goal of this section is to give a sufficient condition for such a triangle $\Delta$ to support a negative curve.

\begin{proposition}\label{prop-neg-curve}
Let $\Delta$ be a triangle of the form $(\dagger)$ with area $\leq \frac{m^2}{2}$ and containing at least ${m+1 \choose 2}+1 $ lattice points. Then $\Delta$ supports an irreducible negative curve vanishing to order $m$ at $e$.
\end{proposition}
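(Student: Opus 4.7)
The plan is to produce a Laurent polynomial $f \in k[x^{\pm 1}, y^{\pm 1}]$ with Newton polygon contained in $\Delta$, vanishing to order $m$ at $e = (1,1)$, and irreducible; the area bound then makes $\{f = 0\} \cap T$ a negative curve in $X = \Bl_e X_\Delta$.

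\textit{Existence and reduction to the vertex coefficients.} Let $V$ be the space of Laurent polynomials supported in $\Delta \cap \Z^2$ vanishing to order $m$ at $e$. Since vanishing to order $m$ imposes $\binom{m+1}{2}$ linear conditions and $|\Delta \cap \Z^2| \geq \binom{m+1}{2} + 1$, we have $\dim V \geq 1$, and every nonzero $f \in V$ gives a curve of self-intersection $2\,\mathrm{Area}(\Delta) - m^2 \leq 0$. To ensure irreducibility, I invoke the criterion from \cite{GGK1}: $f$ is irreducible once its Newton polygon contains both endpoints of an edge of $\Delta$ whose only lattice points are the endpoints. The edge of $\Delta$ from $(0,0)$ to $(m,h)$ has this property because $\gcd(m,h) = 1$, and each of its endpoints is a vertex of $\Delta$, so it lies in the Newton polygon of $f$ exactly when the corresponding monomial coefficient of $f$ is nonzero. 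Because $k$ is infinite, it is enough to show that neither linear functional $c_{(0,0)}|_V$ nor $c_{(m,h)}|_V$ vanishes identically: a generic $f \in V$ then avoids both hyperplanes and is therefore irreducible.

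\textit{Main step: nonvanishing on $V$.} For each $v \in \{(0,0), (m,h)\}$, the statement $c_v|_V \not\equiv 0$ is equivalent to surjectivity of the Taylor-jet evaluation map
\[
\mathrm{ev}: k^{(\Delta \cap \Z^2) \setminus \{v\}} \longrightarrow k^{\binom{m+1}{2}}
\]
sending a Laurent polynomial to its $(m-1)$-jet at $e$. I would prove this by exhibiting an explicit subset $S \subset (\Delta \cap \Z^2) \setminus \{v\}$ of size $\binom{m+1}{2}$ together with a compatibly ordered basis of order-$(<m)$ differential operators at $e$, such that the resulting evaluation matrix is triangular and therefore invertible. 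The $(\dagger)$-structure is the combinatorial input needed: the integrality of the right-edge slope $K = h/(m-b)$ organizes the lattice points of $\Delta$ into columns of controlled heights, and the surplus $|\Delta \cap \Z^2| \geq \binom{m+1}{2} + 1$ ensures that a full-rank subset still exists after removing $v$.

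\textit{Main obstacle.} The uniform triangularity/independence statement of the previous paragraph is the crux and the step I expect to be technically involved. It must hold for every $(\dagger)$-triangle meeting the numerical hypotheses and uses essentially both $K \in \Z$ and $\gcd(m,h) = 1$. The explicit constructions in \cite{GGK1, GGK2} for the special families of Example~\ref{ex-families} suggest that a column-wise inductive ordering of $S$ (starting from the column $x = 0$ or $x = m$, depending on whether $v$ is $(m,h)$ or $(0,0)$) should produce the required triangular matrix. Once this is secured, the criterion from \cite{GGK1} applied to a generic $f \in V$ completes the proof.
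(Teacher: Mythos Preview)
Your reduction is exactly the one the paper makes: existence by dimension count, irreducibility via the edge-criterion of \cite{GGK1}, and then the crux is to show that neither coefficient $c_{(0,0)}$ nor $c_{(m,h)}$ can vanish on $V$, equivalently that the jet map from $k^{(\Delta\cap\Z^2)\setminus\{v\}}$ to $k^{\binom{m+1}{2}}$ is surjective. So the architecture is right. The gap is that your ``Main step'' is only a plan: you propose a column-wise triangular ordering but do not construct it, and you yourself flag this as the main obstacle. As written, the proof is incomplete at precisely the point that carries all the content.

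The paper closes this gap by a route that is both more concrete and more geometric than a direct triangularity argument. First, via Lemma~\ref{interpolation-lemma} (using logarithmic derivatives), surjectivity of your jet map is equivalent to the \emph{non}-existence of a plane curve of degree $m-1$ passing through all $\binom{m+1}{2}$ lattice points of $\Delta\setminus\{v\}$. Second, the $(\dagger)$-hypothesis is used through a shear: cut $\Delta$ along the vertical line $x=b$ and shear the right piece by $(x,y)\mapsto(x,y-K(x-b))$; because $K\in\Z$, this preserves lattice points column by column and produces a triangle $\Delta^{sh}$ with base $[0,m]$. Comparing $\Delta^{sh}$ to an enclosing triangle of height $m$ with no boundary lattice points off its base (Lemma~\ref{column.point.count}), one reads off that the columns of $\Delta$ have sizes $1,1,2,3,\ldots,m$ in some order; the hypothesis $|\Delta\cap\Z^2|\geq\binom{m+1}{2}+1$ forces equality. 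Third, with these column sizes, Bezout's theorem forces any degree $m-1$ interpolant to split off vertical lines through the columns of size $m, m-1,\ldots,2$ in turn, leaving a degree~$0$ curve that must still pass through the remaining single point---impossible. This is Lemma~\ref{lem-irred-curve}, and it finishes the argument.

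In short: the missing idea is to trade linear algebra for interpolation geometry, and then to use the integer slope $K$ via a shear to force the column profile $1,1,2,\ldots,m$, after which Bezout with vertical lines does the work you wanted your triangular matrix to do.
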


The triangle $\Delta$ clearly supports a polynomial $f(x,y)$ that vanishes to  order $m$ at $e$. We need to prove that this $f(x,y)$ is irreducible. Let us recall an irreducibility criterion proved in \cite{GGK1}.

\begin{lemma} \label{lem-irr}
Let $\Delta$ be a triangle and $f(x,y)$ a polynomial supported in $\Delta$. Suppose an edge of $\Delta$ intersects $\ZZ^2$ at its  endpoints only and these endpoints lie in the support of $f(x,y)$. Then $f(x,y)$ is irreducible.
\end{lemma}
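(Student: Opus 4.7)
The plan is to argue by contradiction: suppose $f = g \cdot h$ is a nontrivial factorization in $k[x^{\pm 1}, y^{\pm 1}]$ with neither $g$ nor $h$ a unit (i.e.\ a monomial). By the multiplicativity of Newton polytopes under products, $N(f) = N(g) + N(h)$ as a Minkowski sum of lattice polytopes. Since the two endpoints of $e$ lie in $\mathrm{supp}(f)$, they are vertices of $N(f)$, and $e$ is an edge of $N(f)$ whose only lattice points are its endpoints.

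First I would normalize by a torus automorphism (an affine $\GL_2(\ZZ)$ change of coordinates together with a torus translation; neither affects irreducibility) so that $e$ becomes the horizontal segment from $(0,0)$ to $(1,0)$ and the third vertex of $\Delta$ has positive $y$-coordinate $b > 0$. After multiplying $g$ and $h$ by compensating monomials, assume $N(g), N(h) \subseteq \{y \geq 0\}$ with both attaining $y = 0$. The Minkowski decomposition of the edge $e$ then gives $e = e_g + e_h$, where $e_g$ and $e_h$ are the bottom faces of $N(g)$ and $N(h)$, both parallel to $e$, with primitive lengths that are nonnegative integers summing to $1$. So, after possibly interchanging $g$ and $h$, I may assume that $e_h$ is a single vertex $q$ and $e_g$ is a lattice segment of primitive length $1$; a further horizontal translation puts $q$ at the origin, so that $e_g = e$ coincides with the bottom edge of $\Delta$.

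The key step, and the main obstacle I anticipate, is to exploit the triangular shape of $\Delta$ to conclude $N(h) = \{(0,0)\}$. For any $r \in N(h)$, the Minkowski sum gives $r + N(g) \subseteq N(f) \subseteq \Delta$, and in particular $r + e \subseteq \Delta$. A direct calculation shows that the horizontal cross-section of $\Delta$ at height $y \in [0, b]$ has length $1 - y/b$, which is strictly less than $1$ whenever $y > 0$. Since $r + e$ is a unit-length horizontal segment at height $r_y \geq 0$, this forces $r_y = 0$, after which $r$ must lie in the bottom face $e_h = \{q\} = \{(0,0)\}$, so $r = (0,0)$. Hence $N(h)$ is a single point, so $h$ is a monomial---contradicting the assumption that $h$ is a non-unit. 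The heart of the argument is thus the elementary observation that a primitive-length-$1$ edge of a triangle is \emph{rigid}: the cross-sectional width decreases linearly away from $e$, leaving no room for any nontrivial Minkowski summand along it.
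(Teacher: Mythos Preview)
Your proof is correct and follows exactly the approach the paper sketches: the paper says only that ``the lemma is proved by showing that the Newton polygon of $f(x,y)$ cannot be written as the Minkowski sum of two smaller polygons'' and cites \cite{GGK1}, and you have supplied precisely this argument in full detail. The normalization to a primitive horizontal edge, the decomposition $e = e_g + e_h$ of the minimizing face, and the width computation $1 - y/b$ forcing $N(h)$ to collapse to a point are all sound.
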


The lemma is proved by showing that the Newton polygon of $f(x,y)$ cannot be written as the Minkowski sum of two smaller polygons.

In the case of the triangle $\Delta$ of the form $(\dagger)$, we wish to prove that the two vertices $(0,0)$ and $(m,h)$ lie in the Newton polygon of $f(x,y)$. We will see below that the triangle $\Delta$ contains exactly ${m+1 \choose 2}+1 $ lattice points. Thus, if one of the vertices does not lie in the Newton polygon of $f(x,y)$, then we are in the exceptional situation  where vanishing at $e$ to order $m$ imposes ${m+1 \choose 2}$ conditions on the same number of monomials. It follows that one of these conditions must be trivial. This can be stated in terms of lattice point interpolation:

\begin{lemma}\label{interpolation-lemma}
Let $S$ be a set of ${m+1 \choose 2}$ lattice points on the plane.
Then, $S$ supports a Laurent polynomial vanishing to order $m$ at $e=(1,1)$ if and only if there is a degree $m-1$ curve interpolating all points in $S$. 
\end{lemma}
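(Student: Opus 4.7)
\textbf{Proof plan for Lemma \ref{interpolation-lemma}.}
The plan is to translate the vanishing-to-order-$m$ condition at $e=(1,1)$ into an orthogonality condition with respect to a natural pairing, and then invoke the fact that a square matrix has nontrivial left kernel if and only if it has nontrivial right kernel.

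First I would change coordinates to move $e$ to the origin. A convenient way is to substitute $x = e^u$, $y = e^v$, so a Laurent polynomial $f(x,y) = \sum_{(i,j)\in S} a_{ij} x^i y^j$ becomes the power series $f(e^u,e^v) = \sum_{(i,j)\in S} a_{ij} e^{iu+jv}$. Since $(x,y)\mapsto(\log x,\log y)$ is a local analytic isomorphism near $e$, $f$ vanishes to order $m$ at $e$ if and only if $f(e^u,e^v)$ vanishes to order $m$ at $(u,v)=(0,0)$. Expanding $e^{iu+jv} = \sum_{k\geq 0}(iu+jv)^k/k!$ and collecting by total degree in $(u,v)$, the vanishing condition becomes
\[
\sum_{(i,j)\in S} a_{ij}\,(iu+jv)^k \;=\; 0 \text{ in } k[u,v] \qquad \text{for each } 0\leq k\leq m-1.
\]
Extracting the coefficient of $u^{p}v^{q}$ for each pair with $p+q=k$ turns this into the linear system
\[
\sum_{(i,j)\in S} a_{ij}\, i^{p} j^{q} \;=\; 0 \qquad \text{for all } p,q\geq 0 \text{ with } p+q\leq m-1.
\]

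Now let $V\subseteq k[x,y]$ denote the space of polynomials of total degree $\leq m-1$, which has dimension $\binom{m+1}{2}=|S|$ and is spanned by the monomials $x^{p}y^{q}$ with $p+q\leq m-1$. The system above says precisely that the vector $(a_{ij})\in k^S$ is orthogonal, under the evaluation pairing $V\times k^S\to k$, $(g,(a_{ij}))\mapsto \sum_{(i,j)\in S} a_{ij}\,g(i,j)$, to every $g\in V$. Equivalently, writing $\operatorname{ev}\colon V\to k^S$ for the evaluation map $g\mapsto (g(i,j))_{(i,j)\in S}$, the coefficients $(a_{ij})$ of an admissible $f$ are exactly the elements of the left kernel (i.e., the orthogonal complement of the image) of $\operatorname{ev}$, while a degree-$(m-1)$ curve interpolating $S$ is exactly a nonzero element of $\ker\operatorname{ev}$.

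Since $\dim V = |S|$, the map $\operatorname{ev}$ is represented by a square matrix, and thus $\operatorname{ev}$ is injective if and only if it is surjective. Therefore the left kernel is nonzero if and only if the right kernel is nonzero, yielding the claimed equivalence. The main obstacle is merely the clean bookkeeping of step one (converting the order-$m$ vanishing into the finite collection of monomial identities $\sum a_{ij} i^p j^q = 0$); the linear algebra at the end is immediate from the equality of left and right kernel dimensions for a square matrix.
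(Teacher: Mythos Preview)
Your proposal is correct and is essentially the same argument as the paper's. The paper phrases the first step via logarithmic differential operators, using that $p(x\partial_x, y\partial_y)(x^a y^b)\big|_{(1,1)} = p(a,b)$, whereas you reach the identical linear system $\sum_{(i,j)\in S} a_{ij}\, i^p j^q = 0$ by the exponential substitution $x=e^u$, $y=e^v$; these are dual descriptions of the same change of coordinates, and both conclude by the square-matrix observation that a nontrivial left kernel exists if and only if a nontrivial right kernel does.
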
 

\begin{proof} 
A polynomial $f(x,y)$ supported on $S$ vanishes to order $m$ at $e$ if all partial derivatives  up to order $m-1$ vanish when applied to $f(x,y)$ and evaluated at $e$. The same is true if we replace partial derivatives with logarithmic partial derivatives $p(x\partial_x, y\partial_y)\in k[x\partial_x, y\partial_y]$. When the number of monomials is the same as the number of conditions given by derivatives, then a nontrivial solution $f(x,y)$ exists if and only if one condition is trivial, meaning some logarithmic partial derivative $p$ vanishes on all monomials in $S$ when evaluated at $e$. Now
\[ p(x\partial_x, y\partial_y) (x^a y^b)|_{(x,y)= (1,1)} = p(a,b).\]
This $p$ is a polynomial of degree at most $m-1$ that vanishes at all lattice points in $S$.
\end{proof}

The previous lemma is well-known. See for example \cite{Dumnicki06, Castravet1,He19}. Let us use it to prove Proposition~\ref{prop-neg-curve} with some additional assumptions. 

\begin{lemma} \label{lem-irred-curve}
Let $\Delta$ be a triangle of the form $(\dagger)$ with area $\leq \frac{m^2}{2}$ such that the number of lattice points in the $m+1$ columns of $\Delta$ is $1,1,2,3,\ldots, m$ (possibly permuted). Then $\Delta$ supports an irreducible negative curve vanishing to order $m$ at $e$.
\end{lemma}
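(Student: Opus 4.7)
The plan has two parts: first establish by a dimension count that a nonzero Laurent polynomial $f(x,y)$ supported in $\Delta$ and vanishing to order $m$ at $e$ exists, then use the prescribed column structure to force both endpoints of the hypotenuse into the support of $f$, from which irreducibility will follow via Lemma \ref{lem-irr}. Because $\Delta$ has vertices $(0,0), (b,0), (m,h)$, the extremal columns $x=0$ and $x=m$ contain only the single lattice points $(0,0)$ and $(m,h)$ respectively. Hence the two $1$'s in the hypothesized sequence $1,1,2,3,\ldots,m$ must correspond to these extremal columns, and the interior columns $x=1,\ldots,m-1$ realize the sizes $2, 3, \ldots, m$ at some permutation $c_2, c_3, \ldots, c_m$ of $\{1,\ldots,m-1\}$. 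The total lattice count is $\binom{m+1}{2}+1$, which exceeds the $\binom{m+1}{2}$ linear conditions imposed by vanishing to order $m$ at $e$, so a nonzero $f(x,y)$ exists.

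Since $\gcd(m,h)=1$, the hypotenuse from $(0,0)$ to $(m,h)$ meets $\ZZ^2$ only at its endpoints, so by Lemma \ref{lem-irr} it suffices to show both $(0,0)$ and $(m,h)$ lie in the support of $f$. Suppose on the contrary that one of them, say $(0,0)$, is absent from the support (the case of $(m,h)$ being symmetric). Then the support of $f$ lies in a set $S$ of exactly $\binom{m+1}{2}$ lattice points, and Lemma \ref{interpolation-lemma} yields a nonzero polynomial $p(x,y)$ of total degree $\leq m-1$ vanishing on all of $S$.

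The heart of the argument is a telescoping factorization of $p$. For $x=c_m$, the polynomial $p(c_m, y)$ has degree $\leq m-1$ in $y$ but $m$ distinct roots (one per lattice point of that column), so it vanishes identically and $(x-c_m) \mid p$. Inductively, after dividing out $(x-c_m)(x-c_{m-1})\cdots(x-c_{m-j+1})$, the remaining quotient has total degree $\leq m-1-j$ and must vanish on the $m-j$ distinct $y$-values of column $x=c_{m-j}$; since the $c_k$'s are distinct, the previously extracted linear factors are nonzero at $c_{m-j}$, forcing $(x-c_{m-j})$ to divide the quotient. Continuing to $j=m-1$ yields $p(x,y) = c\prod_{k=2}^m (x-c_k)$ for a constant $c$. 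But $p$ must also vanish at $(m,h) \in S$, and $p(m,h) = c\prod_{k=2}^m (m-c_k)$ is nonzero (each $m-c_k \geq 1$) unless $c=0$; the symmetric case produces $p(0,0) = c\prod_{k=2}^m(-c_k) \neq 0$ unless $c=0$. Either way $p \equiv 0$, contradicting the interpolation lemma. The only substantive technicality is the bookkeeping in the induction, which the telescoping structure of the column sizes $m, m-1, \ldots, 2$ at distinct abscissae renders routine.
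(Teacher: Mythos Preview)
Your proof is correct and follows essentially the same approach as the paper's. The paper's proof invokes ``Bezout's theorem'' to peel off the vertical lines through the columns of size $m, m-1, \ldots, 2$ and observes that one point is still left over; your telescoping factorization is precisely that Bezout argument written out in coordinates, and your observation that the two size-$1$ columns are forced to be $x=0$ and $x=m$ makes explicit a step the paper leaves implicit.
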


\begin{proof}
Notice that the number of lattice points in $\Delta$ is $1+1+2+\ldots+m =  {m+1 \choose 2}+1$. This implies that there exists a polynomial $f(x,y)$ supported in $\Delta$ and vanishing to order $m$ at $e$. If the Newton polygon of $f(x,y)$ does not contain one of the vertices $(0,0)$ or $(m,h)$, then there must be a degree $m-1$ curve through the remaining lattice points that lie in columns of size $1,2,3,\ldots, m$. This is not possible by Bezout's theorem: the curve must consist of vertical lines along the columns with $m, m-1,\ldots,2$ points, after which there is still one point left over.
\end{proof}

It remains to prove that a triangle as in Proposition~\ref{prop-neg-curve} indeed contains the correct number of lattice points in its columns.

Consider $\Delta$ of the form $(\dagger)$. Let us divide the triangle $\Delta$ into two smaller triangles using the vertical line $x=b$. The two smaller triangles have a new common vertex 
\[ \left(b, b \frac{h}{m}\right).\]
To the right hand triangle we apply the shear transformation
\[ (x,y) \mapsto (x, y-K(x-b)).\]
Let $\Delta^{sh}$ be the union of the left triangle and the sheared right triangle. Then $\Delta^{sh}$ is again a triangle with vertices
\[ (0,0), (m,0)\text{  and  } \left(b, b \frac{h}{m}\right).\]
The shear transformation maps lattice points to lattice points and preserves columns. Hence, $\Delta$ and $\Delta^{sh}$ contain the same number of lattice points column by column. See Figure \ref{shear}. 

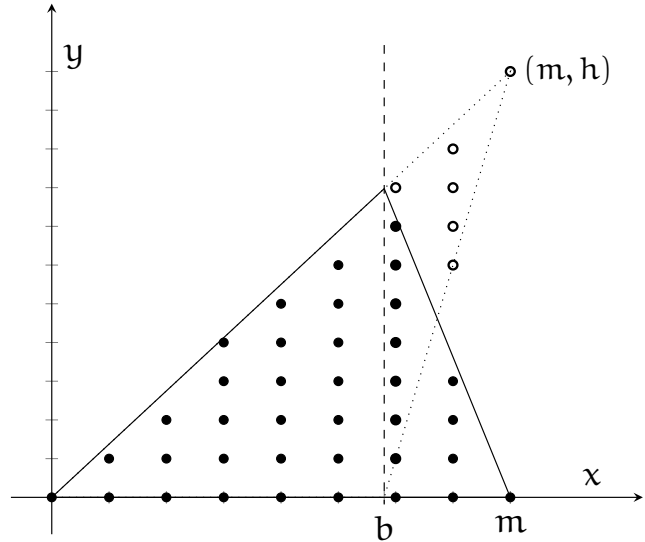
\begin{figure}[H]
  \centering
  \scalebox{1.1}{
    \begin{tikzpicture}[trim axis left, trim axis right]
  \begin{axis}[
    title={},
    xmin=-0.2,
    ymin=-0.2,
    xmax=9.8,
    ymax=12,
    axis x line=center, axis y line=center,
    enlargelimits=false,
    minor tick num=0,
    axis line style={shorten >=-10pt, shorten <=-10pt},
    xticklabels={,,},
    yticklabels={,,},
    xtick={1,2,...,8},
    ytick={1,2,...,11},
    xlabel=$x$,
    ylabel=$y$,
    extra x ticks={5.8,8},
    extra x tick labels={$b$,$m$}
    ]
    \node[label={0:{\small $(m,h)$}}] at (axis cs: 7.8,11) {};
    \node at (axis cs:5.8,12) (A) {};
    \node at (axis cs:5.8,-0.5) (B) {};
    \draw [dashed] (A) -- (B);
    \draw [dotted] (axis cs:0,0) -- (axis cs:8,11) -- (axis cs: 29/5,0) -- cycle;
    \foreach \x/\y in {6/1,6/2,6/3,6/4,6/5,6/6,6/7,6/8,7/6,7/7,7/8,7/9,8/11}
    {
      \edef\temp{\noexpand\draw[thick] (axis cs:\x,\y) circle (1.5pt);}
      \temp
    }
    \foreach \x/\y in {0/0,1/0,1/1,2/0,2/1,2/2,3/0,3/1,3/2,3/3,3/4,4/0,4/1,4/2,4/3,4/4,4/5,5/0,5/1,5/2,5/3,5/4,5/5,5/6,6/0,6/1,6/2,6/3,6/4,6/5,6/6,6/7,7/0,7/1,7/2,7/3,8/0}
    {
      \edef\temp{\noexpand\draw[fill] (axis cs:\x,\y) circle (1.5pt);}
      \temp
    }
    \draw (axis cs:0,0) -- (axis cs:8,0) -- (axis cs: 29/5,319/40) -- cycle;
  \end{axis}
\end{tikzpicture}
    }
    \caption{Comparison between $\Delta$ and $\Delta^{sh}$ for $RT(8,3)$ with $K=5$.}
    \label{shear}
\end{figure}

\begin{lemma} \label{column.point.count} 
Let $\tilde{\Delta}$ be the triangle with vertices $(0,0), (m,0), (b', m)$, where $m > 0$ is an integer and $0< b' < m$ is rational. If the only lattice points on the boundary of $\tilde{\Delta}$ are on its lower edge,  
then the number of lattice points in the columns of $\tilde{\Delta}$ is $1,1,2,3,\ldots, m$ (possibly permuted).
\end{lemma}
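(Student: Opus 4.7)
The plan is to count lattice points row by row, and then extract the column counts from how the row intervals shrink. For each integer $k \ge 0$, let $I_k = \{j \in \ZZ : (j,k) \in \tilde\Delta\}$. Since $\tilde\Delta$ is convex, the lattice points in the column $x = j$ are precisely $(j,0), (j,1), \ldots, (j, c_j - 1)$, where $c_j$ denotes the column count; equivalently $c_j = k+1$ iff $j \in I_k \setminus I_{k+1}$ (with the convention $I_m = \emptyset$).

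The first step is to pin down the sizes $|I_k|$. The two slanted edges have equations $x_L(k) = b'k/m$ on the left and $x_R(k) = m - (m-b')k/m$ on the right, so the height-$k$ slice is a closed interval of length $x_R(k) - x_L(k) = m - k$. The case $k = 0$ gives $|I_0| = m+1$ directly. For $1 \leq k \leq m-1$ the content of the hypothesis is exactly that $x_L(k), x_R(k) \notin \ZZ$: an integer value of $x_L(k)$ would produce an interior boundary lattice point $(x_L(k), k)$ on the left edge, and similarly for $x_R(k)$ on the right. Since a closed interval of integer length $m - k$ with non-integer endpoints contains exactly $m - k$ integers, one obtains $|I_k| = m - k$. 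The hard part of the proof is this translation of the geometric boundary hypothesis into the arithmetic non-integrality of the slice endpoints; everything that follows is combinatorics.

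The second step is structural: $x_L(k)$ is increasing and $x_R(k)$ decreasing in $k$, so $I_0 \supseteq I_1 \supseteq \ldots \supseteq I_{m-1} \supseteq I_m = \emptyset$. From $b'/m \in (0,1)$ one reads $I_1 = \{1, \ldots, m-1\}$, and hence $I_0 \setminus I_1 = \{0, m\}$ contributes two columns with $c_j = 1$. For each $k$ with $1 \leq k \leq m-1$, the size drop $|I_k| - |I_{k+1}| = (m-k) - (m-k-1) = 1$ produces a unique column with $c_j = k+1$; letting $k$ range over $1, \ldots, m-1$ yields one column each with $c_j = 2, 3, \ldots, m$. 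Assembling these contributions gives the desired multiset $\{1, 1, 2, 3, \ldots, m\}$ of column counts.
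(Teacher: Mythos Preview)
Your proof is correct and follows essentially the same route as the paper's: both compute the row counts $m+1, m-1, m-2, \ldots, 1$ from the boundary hypothesis and then read off the column sizes from the unit drops between consecutive nested rows. Your version is more explicit---you formalize the nesting via the sets $I_k$, verify $I_1 = \{1,\ldots,m-1\}$ directly, and track the drop $|I_k|-|I_{k+1}|$---whereas the paper phrases the same idea more tersely as ``adding a row with $r$ points on top of a row with $r+1$ points creates one lattice point above a column.'' One small remark: your ``convention $I_m=\emptyset$'' is in fact a consequence of the hypothesis (if $(b',m)$ were a lattice point it would lie on the boundary off the lower edge), so you may as well say so.
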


\begin{proof}
By assumption, $\tilde{\Delta}$ has both width and height equal to $m$. Hence the slice of the triangle with $y$ constant is an interval of length $m-y$.  Since the only lattice points on the boundary of the triangle are the points on its lower edge, then the number of lattice points in the rows $y=0, 1,  \ldots, m-1$ is $m+1, m-1, m-2, \ldots, 1$. 

Consider a column of lattice points in $\tilde{\Delta}$, $(c,0), (c,1),\ldots, (c,d)$. Let $(c, d+1)$ be the lattice point above the column; we count these points. When we add a row with $r$ lattice points on top of a row with $r+1$ lattice points, we create one such lattice point above a column. This gives the stated number of lattice points in the columns of $\tilde{\Delta}$.  
\end{proof}

\begin{proof}[Proof of Proposition~\ref{prop-neg-curve}]
If the area of $\Delta$ is $\frac{m^2}{2}$, then $b \frac{h}{m} = m$ and the conclusion follows by Lemma \ref{column.point.count} applied to $\Delta^{sh}$. 
To deal with the case where the area is less than $\frac{m^2}{2}$, we compare $\Delta^{sh}$ with a larger triangle $\tilde{\Delta}^{sh}$ that has area $\frac{m^2}{2}$.
Let  $\tilde{\Delta}^{sh}$ have vertices 
\[ (0,0), (m,0), (b', m)\]
for some rational $0 < b' < m$ such that  $\Delta^{sh} \subseteq \tilde{\Delta}^{sh}$. We can choose $b'$ so that  $\tilde{\Delta}^{sh}$ has no lattice points on its boundary except on the lower edge. By Lemma \ref{column.point.count}, $\tilde{\Delta}^{sh}$ has columns $1, 1, 2, \ldots, m$. If we now know that $\Delta$ (and hence also $\Delta^{sh}$) contains at least ${m+1 \choose 2}+1$ lattice points, then 
\[ \Delta^{sh}\cap \ZZ^2 = \tilde{\Delta}^{sh} \cap \ZZ^2\]
and hence both $\Delta^{sh}$ and $\Delta$ must also have columns $1,1,2,\ldots,m$. The claim now follows from Lemma~\ref{lem-irred-curve}.
\end{proof}

\section{Proofs of Theorem~\ref{thm-main-A} and Theorem~\ref{thm-rec}}    \label{section.proofs.Thm1.1.A.Thm1.2}

We start by proving that the triangles $IT(M,N)$ and $RT(M,N)$ support a negative curve. This follows from Proposition~\ref{prop-neg-curve} once we know that the triangles contain enough lattice points because the triangles are of the form $(\dagger )$ defined at the beginning of Section \ref{dagger}. 

\begin{lemma}     \label{lemma.point.count.IT.RT}
The triangles $IT(M,N)$ and $RT(M,N)$ contain ${m+1 \choose 2}+1 $ lattice points, where $m=M+N$ in the integral case and $m=M$ in the rational case.
\end{lemma}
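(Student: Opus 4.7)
My plan is to apply Pick's theorem, handled slightly differently for the two triangle types.

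For the integral triangle $IT(M,N)$, all three vertices are lattice points, so Pick's theorem applies directly. The Diophantine hypothesis $(M+N)^2 = KMN+1$ rewrites the area as $\tfrac{1}{2}M\cdot KN = (m^2-1)/2$. The same identity forces $\gcd(M+N, KMN) = 1$, hence in particular $\gcd(M+N, KN) = 1$, so the hypotenuse from $(0,0)$ to $(M+N, KN)$ contributes only its two endpoints as boundary lattice points; the two remaining edges contribute $M+1$ and $N+1$ lattice points, yielding boundary count $B = m+1$. Pick's theorem then gives $|IT(M,N)\cap\ZZ^2| = (m^2-1)/2 + (m+1)/2 + 1 = \binom{m+1}{2}+1$.

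For the rational triangle $RT(M,N)$, the vertex $(M-(M+N)/K, 0)$ is not a lattice point (since $\gcd(M+N, K) = 1$ and $K\geq 3$), so I would first pass to the convex hull $P'$ of $RT(M,N)\cap\ZZ^2$, which is a lattice polygon with the same lattice-point count. Writing $M+N = qK+r$ with $1\leq r\leq K-1$, the $q+1$ lattice points on $RT$'s right edge (a line of slope $K$) are $(M-q+j,\, jK+r)$ for $j=0,\ldots,q$, all collinear; together with the rightmost bottom-edge lattice point $(M-q-1, 0)$, these show that $P'$ is the quadrilateral with vertices $(0,0),\, (M-q-1, 0),\, (M-q, r),\, (M, M+N)$.

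The crucial arithmetic step is the claim $r\in\{1, K-1\}$. I would prove this (either as a separate step or as part of the proof of Theorem~\ref{thm-rec}) by the observation that $\iota_0$ preserves $M+N$ while $\iota_1$ replaces $M+N$ by $(K-1)M-N \equiv -r\pmod K$, so $r$ toggles between itself and $K-r$; since the iteration begins at $(0,1)$ with $r=1$, every non-negative solution satisfies $r\in\{1, K-1\}$. Given this, the tiny triangle with vertices $(M-q-1, 0),\, (M-(M+N)/K, 0),\, (M-q, r)$ cut from $RT$ to form $P'$ has base $1 - r/K$ and height $r$, hence area $r(K-r)/(2K) = (K-1)/(2K)$. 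Therefore $A(P') = M^2/2 - 1/(2K) - (K-1)/(2K) = (M^2-1)/2$, and a short edge-by-edge calculation (using $\gcd(M, M+N)=1$ and $\gcd(q, qK)=q$) gives $B(P') = M+1$. Pick's theorem then yields $|RT(M,N)\cap\ZZ^2| = |P'\cap\ZZ^2| = (M^2-1)/2 + (M+1)/2 + 1 = \binom{m+1}{2}+1$.

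The main obstacle is the rational case: identifying $P'$ and establishing $r\in\{1, K-1\}$, which ties the combinatorics of lattice points in $RT$ to the Diophantine classification in Theorem~\ref{thm-rec}.
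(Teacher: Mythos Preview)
Your proof is correct and follows essentially the same route as the paper's: Pick's theorem applied directly to $IT(M,N)$, and to the convex hull of the lattice points in $RT(M,N)$, with the key observation that the residue $r$ of $M+N$ modulo $K$ lies in $\{1,K-1\}$ so that the excised corner has area $(K-1)/(2K)$.

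The one substantive difference is your treatment of $r\in\{1,K-1\}$. The paper simply asserts that ``from equation~(\ref{eqn-MN}) it follows that $K$ divides $M+N\pm 1$,'' but equation~(\ref{eqn-MN}) alone only gives $(M+N)^2\equiv 1\pmod K$, which for composite $K$ (e.g.\ $K=8$ or $K=15$) does not force $M+N\equiv\pm 1$. Your argument---tracking $M+N\bmod K$ through the involutions $\iota_0,\iota_1$ starting from $(0,1)$---actually supplies the missing justification, at the cost of invoking the classification of solutions in Theorem~\ref{thm-rec}. Since the proof of Theorem~\ref{thm-rec} is independent of this lemma, there is no circularity; you are just making explicit a step the paper leaves implicit. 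A minor point: your description of $P'$ as a quadrilateral degenerates to a triangle when $q=0$ (e.g.\ $(M,N)=(K-2,1)$), but your area and boundary formulas remain valid in that case.
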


\begin{proof}
We apply the Pick's Theorem to the integral triangle $\Delta=IT(M,N)$:
\[ |\Delta \cap \ZZ^2| = Area(\Delta) +\frac{P(\Delta)}{2} +1,\]
where $P(\Delta)$ is the perimeter of $\Delta$. The triangle has base $M$ and height $KN$, hence
\[2Area(\Delta) = KMN = (M+N)^2-1 = m^2-1.\]
The boundary of the triangle contains exactly one lattice point for every integral value of $x$ between $0$ and $M+N$, hence $P(\Delta)= M+N+1=m+1$. This gives  
\[ |\Delta \cap \ZZ^2| = \frac{m^2-1}{2} + \frac{m+1}{2} +1 = {m+1 \choose 2} +1.\]

Let now $\Delta=RT(M,N)$. We will apply Pick's Theorem to the convex hull of the lattice points in $\Delta$. Let us find the lowest lattice point on the right edge of the triangle. From equation (\ref{eqn-MN}) it follows that $K$ divides $M+N \pm 1$. This implies that the lowest lattice point on the right edge of $\Delta$ has $y$-coordinate $1$ or $K-1$. In either case the convex hull of lattice points in $\Delta$ is a $4$-gon with area 
\[ Area(\Delta)-\frac{1}{2}\left(1-\frac{1}{K}\right) = \frac{m^2-1}{2}. \] 
Again, the boundary of $\Delta$ contains one lattice point for every integral value of $x$ from $0$ to $M$. Pick's Theorem now says that $\Delta$ contains ${m+1 \choose 2} +1$ lattice points.
\end{proof}

This proves the first part of Theorem~\ref{thm-main-A}. 

Next we study isomorphisms between these negative curves. Recall that two such curves are isomorphic if there exists an automorphism of the torus $T$ mapping one curve to another. An  automorphism of $T$ is given by an integral affine linear automorphism of $\ZZ^2$. Two negative curves are then isomorphic only if such an integral affine linear automorphism maps the Newton polygon of one $f(x,y)$ to the Newton polygon of another. 

\begin{lemma} 
The  only integral affine linear isomorphisms between the triangles $IT(M,N)$ and $RT(M,N)$ are
\[ IT(M,N) \isom IT(\iota_0(M,N)),\]
\[ RT(M,N) \isom RT (\iota_1(M,N)).\]
\end{lemma}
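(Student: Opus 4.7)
The plan is to exploit the fact that any integral affine automorphism $\phi(v) = Lv + c$ with $L \in GL_2(\mathbb{Z})$ and $c \in \mathbb{Z}^2$ preserves both the set of integer vertices of a polygon and the lattice length (i.e., the gcd of the components) of every edge vector. I will use these invariants to pin down which pairs $(M,N) \leftrightarrow (M',N')$ admit an isomorphism, and then exhibit an explicit $L$ realizing each nontrivial case.

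For the integral family, I first extract from the defining equation $(M+N)^2 = KMN+1$ the identities $\gcd(M,N) = 1$ and $\gcd(M+N,K) = 1$; the latter follows from $N \equiv -M \pmod{M+N}$, which gives $KM^2 \equiv 1 \pmod{M+N}$. Together these yield $\gcd(M+N,KN) = 1$. The three edge vectors of $IT(M,N)$, namely $(M,0)$, $(N,KN)$, and $(-(M+N),-KN)$, therefore have lattice lengths $M$, $N$, and $1$. Matching multisets with $IT(M',N')$ forces $\{M,N\} = \{M',N'\}$, so either $(M',N') = (M,N)$ or $(M',N') = \iota_0(M,N)$. In the nontrivial case, I will verify that $L = \begin{pmatrix} -1 & 0 \\ -K & 1 \end{pmatrix}$ together with the translation $c = (M+N, KM)$ permutes the vertices of $IT(M,N)$ onto those of $IT(N,M)$.

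For the rational family, the triangle $RT(M,N)$ has exactly two integer vertices because $K \nmid M+N$ (since $(M+N)^2 \equiv 1 \pmod K$). Any isomorphism must therefore send the unique rational vertex of $RT(M,N)$ to the rational vertex of $RT(M',N')$, leaving only two possibilities for the image of $(0,0)$: either $(0,0)$ itself or $(M', M'+N')$. In each case, the fact that both rational vertices lie on the $x$-axis constrains the first column of $L$; matching the remaining integer vertex together with $\det L = \pm 1$ yields a small system whose entries must all be integers. The two cases produce respectively the identity and the isomorphism given by the same $L$ as above with translation $c = (M, (K-1)M - N)$, which realizes $(M',N') = \iota_1(M,N)$. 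Finally, no cross-isomorphism between the families can exist, since $IT(M,N)$ has three integer vertices while $RT(M',N')$ has only two.

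The main obstacle will be the last step of the $RT$ analysis: after writing down the integrality constraints on the entries of $L$, I will need to combine the bound $K, K' \geq 3$ with the equation $(M+N)^2 = KMN+1$ to derive divisibility relations $K \mid K'$ and $K' \mid K$ and thereby force $K = K'$, which rules out spurious maps that might otherwise match vertex images between triangles with different $K$.
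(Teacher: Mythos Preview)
Your proposal is correct and complete in outline, but it takes a different route from the paper, especially in the rational case.

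For the integral triangles, both arguments rest on an invariant of the triangle preserved by $GL_2(\ZZ)$-affine maps. You use the multiset of edge lattice lengths $\{M,N,1\}$; the paper instead uses the multiset of normal-fan cone multiplicities $\{MK,NK,K\}$. Either invariant determines $\{M,N\}$ (and hence $K$ via the defining equation), so the two arguments are parallel here.

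For the rational triangles the approaches diverge. The paper continues with the same invariant: the cone multiplicities of $RT(M,N)$ are $\{M+N,\ K,\ MK-(M+N)\}$, and since the non-integral vertex must go to the non-integral vertex, the multiplicity $K$ at that vertex is pinned down; the remaining pair has sum $MK$, recovering $M$, and the two choices for $N$ are exactly $(M,N)$ and $\iota_1(M,N)$. This is short and uniform with the integral case. Your route instead fixes the image of the rational vertex, writes down the linear system for the entries of $L$, and extracts $K=K'$ from integrality of $L_{11}$ and $L_{12}$ together with $\det L=\pm1$ and the bound $K,K'\ge 3$. This works (indeed one finds $L_{12}=1/K'\mp 1/K$ in the case $(0,0)\mapsto(0,0)$, and $L_{11}=-K/K'$ in the swapped case, from which $K=K'$ follows as you anticipate), but it is more computational and less symmetric than the paper's argument. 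The payoff of your approach is that it is entirely elementary, requiring no mention of normal fans; the payoff of the paper's is brevity and a single invariant handling both families at once.

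Your observation that $IT$ and $RT$ cannot be isomorphic because they have different numbers of integral vertices is correct and matches the paper's one-line dismissal of that case. The explicit isomorphisms you write down (with $L=\begin{pmatrix}-1&0\\-K&1\end{pmatrix}$ and the stated translations) agree with the paper's formulas.
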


\begin{proof}
The first isomorphism is given by the transformation
\[ (x,y) \mapsto (M+N-x, K(M-x)+y),\]
the second one by 
\[ (x,y) \mapsto (M-x, K(M-x)+y-(M+N)).\]

These two transformations map the left edge of one triangle to the left edge of the other and exchange the other two edges.

To prove that there are no other isomorphisms we consider the normal fan of the triangle. It has three maximal cones with integer multiplicities $m_1, m_2, m_3$. This set of multiplicities is preserved under an isomorphism. The multiplicities are $NK, K$ and $MK$ in the integral case and $M+N, K$ and $MK-M-N$ in the rational case.

Clearly no integral triangle is isomorphic to a rational triangle. In the integral case the set $\{NK, K, MK\}$ determines $K$ (the GCD of the triple) and the set $\{N, M\}$. Thus, only the triangles that differ by $\iota_0$ have the same triple of multiplicities.

In the rational case, the multiplicity $K$ corresponds to the unique non-integral vertex. If $K$ is known then from the set $\{ M+N, MK-(M+N)\}$ we recover $M$. The only triangles with the same $K$ and $M$ are the two that differ by $\iota_1$.
\end{proof}

Let us now prove the second part of Theorem~\ref{thm-main-A}, showing that the negative curves are pairwise non-isomorphic. 
We will restrict to the curves in the statement of the theorem, namely those corresponding to $IT(M,N)$ for $M\geq N>0$ and those corresponding to $RT(M,N)$ for $M>N>1$. We will also restrict to $K\geq 4$, the case $K=3, M=N=1$ giving the unique negative curve that vanishes to order $2$ at $e$.

To show that these curves are non-isomorphic it suffices to prove that the Newton polygons of the defining polynomials $f(x,y)$ are non-isomorphic. We show that the Newton polygon determines the triangle uniquely, hence by the previous lemma, no two Newton polygons can be isomorphic.

In Lemma \ref{lem-IT} below we show that in the integral case the Newton polygon coincides with the triangle. This shows that no two negative curves supported in the integral triangles are isomorphic to each other or to a negative curve supported in a rational triangle.

For rational triangles the Newton polygons are harder to determine. In general they are not equal to the convex hull of lattice points in the triangle.  What follows is a proof that the Newton polygon nevertheless uniquely determines the triangle.

In Lemma~\ref{lem-Newt} below we show that in addition to the left edge of the triangle, the Newton polygon also contains edges along the other two sides of the triangle. Suppose now that the Newton polygon uniquely determines one of its edges, the one coinciding with the left edge of the triangle. Then, it determines the triangle by  extending the two adjacent edges until they meet.

Consider the edge in the Newton polygon that coincides with the left edge of the triangle. We will write down properties of this edge that uniquely determine it among all edges of the Newton polygon. Consider the following properties for an edge in the Newton polygon of $f(x,y)$:
\begin{enumerate}
\item There exists a triangle that contains the Newton polygon and shares the same edge.
\item The lattice length of the edge is $1$.
\item The coefficients of the two monomials in $f(x,y)$ corresponding to the vertices of the edge have the same absolute value. 
\item The triangle constructed from the edge by extending the two adjacent sides must contain ${m+1 \choose 2} + 1$ lattice points for some $m$.
\end{enumerate}

The Newton polygon corresponding to $RT(M,N)$ satisfies (1) - (4) with respect to the edge from $(0,0)$ to $(M,M+N)$. Indeed, (1) and (2) are clear, (3) is shown in Section~\ref{sec-rec}, and (4) is shown in Lemma~\ref{lemma.point.count.IT.RT}. 
In an arbitrary convex polygon that is not a triangle there can be at most two edges satisfying property (1), and these edges have to be adjacent. Aside from the left hand side edge, the only other edge of the Newton polygon of the negative curve of $RT(M,N)$ that satisfies properties (1) and (2) is the edge from $(M,M+N)$ to $(M-1,M+N-K)$. Indeed, the other possible edge from $(0,0)$ to $(1,0)$ does not satisfy property (1):  the left edge of the triangle has larger slope than the line segment from $(1,0)$ to $(M-1,M+N-K)$.


Now consider the edge from $(M,M+N)$ to $(M-1,M+N-K)$. Lemma~\ref{lem-Newt} shows that  $f(x,y)$ has the form $f(x,y) = \pm 1 +b x^{M-1}y^{M+NK} +\ldots$, where $b\neq \pm 1$ when $N>K-2$; hence (3) is not satisfied. When $N=K-2$ then Lemma~\ref{lem-edge} shows that the edge with slope $K$ in the Newton polygon has length $K-3$, so (2) is not satisfied when $K>4$.  The only case remaining to consider is 
$K=4, M=3$ and $N=2$. The triangle that we get from this edge contains $9$ lattice points; hence (4) is not satisfied. Then, properties (1)-(4) determine the left edge of the triangle in all cases.

This finishes the proof of Theorem~\ref{thm-main-A}.

\subsection{Proof of Theorem~\ref{thm-rec}.}
It is easy to check that the three involutions map integral solutions of equation (\ref{eqn-MN}) to integral solutions and satisfy the given relation. It is also clear from the equation that there is no integral solution with $M<0, N>0$.  

For $K=3$ one can find that the only solution is $(M,N)=(1,1)$. For $K\geq 4$, the solutions lie on the two branches of a hyperbola, one branch with $M>N$ and the other with $M<N$. The three involutions exchange the branches.  In particular, starting from the branch where $M<N$ and applying $\iota_1$, we decrease $N$ and leave $M$ the same. Given a positive integer solution $(M,N)$, we apply $\iota_1$ if $M<N$ and we apply $\iota_0$ otherwise. A sequence of these involutions will keep $M,N$ non-negative, and it will decrease the sum $M+N$ until $(M,N)=(0,1)$. 

\section{Proof of Theorem~\ref{thm-main-B}}

Let $\Delta$ be a triangle as in Theorem~\ref{thm-main-B} and let $Z$ be the convex hull of its lattice points. Then $Z$ is an integral polytope with top edge from $(0,0)$ to $(m,h)$ and at least two edges on the lower boundary. We will assume that $\Delta$ is a minimal triangle containing $Z$, by which we mean that among the sequence of edges on the lower boundary of $Z$ the first and last lie on the edges on $\Delta$.

Our main tool will be Pick's formula applied to Z:
\[ C = A + \frac{P}{2} + 1,\]
where $P$ is the lattice perimeter of $Z$, $A$ is its area, and $C$ is the number of lattice points in $Z$.

Consider the case where $\Delta$ is either $IT(M,N)$ or $RT(M,N)$. In the integral case $Z$ is equal to $\Delta$ and its lower boundary consists of two edges with integral slopes $0$ and $K$. In the rational case $Z$ is a $4$-gon. Its lower boundary consists of three edges with integral slopes $0$, $L$, $K$, where $L=1$ or $L=K-1$, and the edge with slope $L$ has lattice length $1$. Let us prove that these properties of the lower edges of $Z$ characterize the triangles $IT(M,N)$ and $RT(M,N)$.

\begin{lemma} \label{lem-slopes}
Let $\Delta$ be a triangle as in Theorem~\ref{thm-main-B} and $Z$ the convex hull of its lattice points. Assume that $\Delta$ is the minimal triangle containing $Z$.
\begin{enumerate}
\item If $Z$ is a triangle with lower edges having integral slopes $0$ and $K$, then $\Delta$ is equal to $IT(M,N)$ for some $M,N$ satisfying equation (\ref{eqn-MN}).
\item If $Z$ has three lower edges with integral slopes $0$, $L$ and $K$, where $L=1$ or $L=K-1$ and the edge with slope $L$ has lattice length $1$, then  $\Delta$ is equal to $RT(M,N)$ for some $M,N$ satisfying equation (\ref{eqn-MN}).
\end{enumerate}
\end{lemma}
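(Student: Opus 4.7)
The plan is to apply Pick's formula $C(Z)=A(Z)+P(Z)/2+1$ to the integral polytope $Z$ and combine the resulting identity with the two inequalities coming from condition~(a)---namely $|Z\cap\ZZ^2|\geq\binom{m+1}{2}+1$ and $Area(\Delta)\leq m^2/2$---to force the relation $(M+N)^2=KMN+1$ on the parameters extracted from $\Delta$.

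In case~(1), the minimality hypothesis immediately identifies $\Delta=Z$: the third vertex $(r,s)$ of $\Delta$ must sit at the intersection of the two lines carrying the lower edges of $Z$, which is $(m-h/K,0)$. Integrality of this vertex forces $K\mid h$; writing $h=KN$ and $M=m-N$ recovers precisely the vertex layout of $IT(M,N)$. A direct computation gives $Area(Z)=KMN/2$, and the coprimality $\gcd(m,h)=1$ forces the three lattice edge lengths of $Z$ to be $1$, $M$, $N$, so $P(Z)=m+1$. Substituting into Pick's formula and combining with the two inequalities of~(a) produces the double bound $m^2-1\leq KMN\leq m^2$.

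In case~(2), the length-one slope-$L$ edge of $Z$ is a primitive lattice vector $(1,L)$ attached at some lattice point $(b_1,0)$, so $Z$ is the quadrilateral with vertices $(0,0)$, $(b_1,0)$, $(b_1+1,L)$, $(m,h)$. Minimality identifies $\Delta$ as the triangle obtained by extending the horizontal and slope-$K$ edges of $Z$; a short calculation shows that the third vertex is $(m-h/K,0)$, matching $RT(M,N)$ with $M=m$ and $N=h-m$. The small triangle $\Delta\setminus Z$ has area $L(K-L)/(2K)=(K-1)/(2K)$ in either admissible case $L\in\{1,K-1\}$, and $P(Z)=m+1$ continues to hold since the bottom and slope-$K$ edges of $Z$ contribute lattice lengths summing to $m-1$. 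Pick's formula combined with the two inequalities of~(a) then yields $(M+N)^2-1\leq KMN\leq(M+N)^2$.

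The final step, which I expect to be the main obstacle, is ruling out $KMN=(M+N)^2$. In both cases $\gcd(m,h)=1$ gives $\gcd(M,N)=1$, hence $\gcd(MN,(M+N)^2)=1$; combined with the divisibility $MN\mid(M+N)^2$ this forces $M=N=1$, and then $K=4$. To exclude this degenerate instance one must use the structural hypotheses of each case: in case~(1), the coprimality $\gcd(m,h)=1$ applied to $h=KN$ also yields $\gcd(M+N,K)=1$, which fails for $M=N=1,K=4$; in case~(2), one enumerates the lattice points of the resulting triangle $\Delta$ with vertices $(0,0),(1,2),(1/2,0)$ and finds only $(0,0),(1,0),(1,1),(1,2)$, three of which are collinear, so $Z$ degenerates to a triangle---contradicting the assumption that the lower boundary of $Z$ has three distinct edges. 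Once $KMN=(M+N)^2$ is excluded we obtain $(M+N)^2=KMN+1$, which is equation~(\ref{eqn-MN}), and the triangles $\Delta$ constructed above become precisely $IT(M,N)$ and $RT(M,N)$.
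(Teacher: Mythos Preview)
Your argument is correct in substance and follows the same overall architecture as the paper---compute $P(Z)=m+1$, apply Pick's formula together with the two inequalities from condition~(a), and deduce that $Area(Z)=\tfrac{m^2-1}{2}$, which translates into equation~(\ref{eqn-MN}). The identification of the triangles and the area computations are all fine.

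The one place where you and the paper diverge is in ruling out the boundary value $KMN=(M+N)^2$ (equivalently $Area(Z)=m^2/2$). The paper handles this in one line, packaged as a separate lemma just before the proof: writing $Area(Z)=\tfrac{m^2-\varepsilon}{2}$ with $\varepsilon\geq 0$ an integer, Pick's formula gives $C=\tfrac{m^2-\varepsilon+m+1}{2}+1$, and since $m^2+m+1$ is odd, $\varepsilon=0$ would make $C$ a half-integer. This parity observation works uniformly in both cases and avoids any further casework. Your route via $\gcd(MN,(M+N)^2)=1$ forcing $M=N=1$, $K=4$, and then eliminating this by hand, is valid but longer; in case~(2) you could also have simply observed that three nontrivial lower edges already force $m\geq 3$, so $M=m=1$ is impossible from the outset.

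One small slip: in your degenerate case~(2) check, the triangle with vertices $(0,0),(1,2),(1/2,0)$ contains only the two lattice points $(0,0)$ and $(1,2)$, not the four you list---at $x=1$ both bounding edges meet at $y=2$. This makes your conclusion even stronger ($Z$ is a segment, not a quadrilateral), so the error is harmless, but it is worth correcting.
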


In both  cases of the lemma, the perimeter $P$ of $Z$ equals $m+1$. Let us first apply Pick's formula to this situation.

\begin{lemma}
If a lattice polygon $Z$ contains at least ${m+1\choose 2}+1$ lattice points, has area $\leq \frac{m^2}{2}$ and  perimeter $m+1$, then the area of $Z$ is $\frac{m^2-1}{2}$ and it contains ${m+1\choose 2}+1$ lattice points.
\end{lemma}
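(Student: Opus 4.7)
The plan is to invoke Pick's formula together with a simple parity observation; the statement is essentially forced by counting, so no step should be seriously obstructive.

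First I would rewrite the given perimeter hypothesis in Pick's form. Since $Z$ is a lattice polygon whose boundary contains $P = m+1$ lattice points, Pick's formula (as used already in Lemma~\ref{lemma.point.count.IT.RT}) gives
$$C \;=\; A \,+\, \frac{P}{2} \,+\, 1 \;=\; A \,+\, \frac{m+1}{2} \,+\, 1,$$
where $C$ is the number of lattice points of $Z$ and $A$ is its area. Combining this with the hypothesis $C \geq \binom{m+1}{2}+1 = \tfrac{m(m+1)}{2}+1$ immediately yields
$$A \;\geq\; \frac{m(m+1)}{2} - \frac{m+1}{2} \;=\; \frac{m^2-1}{2}.$$
Together with the upper bound $A \leq m^2/2$, this confines $2A$ to the two-element set $\{m^2-1,\, m^2\}$.

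Now I would eliminate the possibility $2A = m^2$ by a parity check. Writing Pick's formula in the form $2A = 2I + B - 2$ with $I$ the number of interior lattice points and $B = P = m+1$ the number of boundary lattice points, we see that $2A \equiv m+1 \pmod{2}$, i.e., $2A$ has the opposite parity from $m$. Since $m^2$ has the same parity as $m$ while $m^2-1$ does not, this forces $2A = m^2 - 1$. Substituting back into the Pick relation gives $C = \tfrac{m^2-1}{2} + \tfrac{m+1}{2} + 1 = \binom{m+1}{2}+1$, proving both conclusions at once.

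The only conceivable pitfall is tacit assumptions about Pick's formula: it requires $Z$ to be a genuine lattice polygon (which is part of the hypothesis) and that $P$ really denotes the number of boundary lattice points (the convention already adopted in Lemma~\ref{lemma.point.count.IT.RT}). With these in place, the argument is a two-line calculation.
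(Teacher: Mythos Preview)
Your proof is correct and is essentially the paper's argument. Both use Pick's formula to squeeze $2A$ between $m^2-1$ and $m^2$, then eliminate $2A=m^2$; the paper phrases this elimination as ``$C$ would not be an integer,'' which is exactly your parity observation on $2A$ in different clothing.
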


\begin{proof}
Let the area of $Z$ be $\frac{m^2-\varepsilon}{2}$ for some integer $\varepsilon \geq 0$. Pick's formula applied to $Z$ gives
\[ {m+1\choose 2}+1 \leq C  = \frac{m^2-\varepsilon+m+1}{2} + 1 = {m+1\choose 2}+1 + \frac{1-\varepsilon}{2},\]
which simplifies to $\varepsilon\leq 1$. Notice that if $\varepsilon=0$ then $C$ is not an integer, hence $\varepsilon=1$ is the only possibility.
\end{proof}

\begin{proof}[Proof of Lemma~\ref{lem-slopes}] 
In both cases equation (\ref{eqn-MN}) follows from the area of $Z$ being $\frac{m^2-1}{2}$.
Let us do the integral case only. Denote by $M$ and $N$ the lattice lengths of the two lower edges of $Z$, so that $m=M+N$. The height of $Z$ is $NK$, hence twice its area is $MNK$. This must be equal to $(M+N)^2-1$, giving equation (\ref{eqn-MN}). A similar argument applies in the rational case.
\end{proof}

Let us now return to a general triangle as in Theorem~\ref{thm-main-B}. Notice that $Z$ has perimeter $m+1$ if and only if it contains a lattice point on its lower boundary for every $x= 0,1,\ldots,m$. This is equivalent to all lower edges having integral slopes.

\begin{lemma}
Let $\Delta$ be a triangle as in Theorem~\ref{thm-main-B}. Then the perimeter $P$ of $Z$ is equal to $m+1$.
\end{lemma}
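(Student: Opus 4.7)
The plan is to sandwich the lattice perimeter $P$ of $Z$ between $m$ and $m+1$ and then rule out the boundary case $P=m$ by invoking the coprimality hypothesis $\gcd(m,h)=1$ from~(b).

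For the upper bound, I would argue as follows. The top edge of $Z$ runs from $(0,0)$ to $(m,h)$ and has lattice length $\gcd(m,h)=1$, so it contributes exactly $1$ to $P$. All three vertices of $\Delta$ have $x$-coordinates in $[0,m]$, hence every lattice point of $Z$ lies in the strip $0\le x\le m$, and the remaining portion of $\partial Z$ (traveling from $(0,0)$ back to $(m,h)$ along the non-top side) is $x$-monotone and contains no vertical edges; its lattice length is therefore at most its horizontal extent $m$. Thus $P\le m+1$. For the lower bound, Pick's formula $C=A+P/2+1$ combined with $C\ge{m+1\choose 2}+1$ and $A\le Area(\Delta)\le m^2/2$ immediately gives $P\ge m$. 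So $P\in\{m,m+1\}$.

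To rule out $P=m$, I would trace equality through the Pick estimate: $P=m$ forces $A=m^2/2$ and $C={m+1\choose 2}+1$, and since $Z\subseteq\Delta$ are closed convex sets of the same positive area, we must have $Z=\Delta$. In particular, $\Delta$ is then an integral triangle, so the third vertex $(r,s)$ is a lattice point. The standard cross-product area formula for the lattice triangle with vertices $(0,0),(m,h),(r,s)$ gives $|ms-hr|=m^2$, so $m$ divides $hr$. Using $\gcd(m,h)=1$, I conclude $m\mid r$, which contradicts $0<r<m$. Hence $P=m+1$.

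The main (really only) obstacle is this last step. The coprimality hypothesis in~(b) is used essentially: dropping it, one could have an integral triangle in which the lower boundary of $Z$ is short by one lattice unit, and the argument would collapse. The remaining ingredients are routine applications of Pick's theorem and the $x$-monotonicity of the lower boundary.
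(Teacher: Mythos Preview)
Your proof is correct and follows the same overall architecture as the paper's: bound $P\le m+1$ via the top edge and $x$-monotonicity of the lower boundary, bound $P\ge m$ via Pick, then eliminate $P=m$. The difference lies only in that last elimination. The paper, after concluding $Z=\Delta$ is a lattice triangle, argues that exactly one of the two lower edges must have non-integral slope, applies a shear to put the other lower edge on the $x$-axis, deduces the third vertex is $(m-2,0)$, and derives $(m-2)h=m^2$ to contradict $\gcd(m,h)=1$. Your route is more direct: once $(r,s)$ is a lattice point, the determinant formula $|ms-hr|=m^2$ immediately gives $m\mid hr$, hence $m\mid r$ by coprimality, contradicting $0<r<m$. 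Your version avoids the shear and the (somewhat implicit) reason why the third vertex lands at $(m-2,0)$; the paper's version, on the other hand, foreshadows the edge-slope analysis used in the subsequent lemmas. Both are short; yours is the cleaner standalone argument.
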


\begin{proof}
We know that the perimeter of $Z$ cannot be more than $m+1$ because its top boundary consists of one edge of lattice length $1$.

By assumption, the area of $Z$ is $\leq \frac{m^2}{2}$ and its number of lattice points is $\geq {m+1\choose 2}+1$. Pick's formula now gives that $P\geq m$. 

Let us rule out the case $P=m$. In that case, again from Pick's formula, the area of $Z$ is exactly $\frac{m^2}{2}$. Hence $Z$ is an integral triangle. One of its lower edges must have integral slope, the other edge has rational slope. Using a linear transformation, we may assume that $Z$ has vertices $(0,0)$, $(m,h)$, and $(m-2, 0)$. We get that twice the area of $Z$ is $m^2 = (m-2)h$. This gives a contradiction to $m$ and $h$ being relatively prime. 
\end{proof}

The previous lemma shows that the lower edges of $Z$ have integral slopes. We can apply a shear transformation $(x,y)\mapsto (x,y+ax)$ so that the first of the lower edges has slope $0$.  We need to show that these slopes are as in the case of $IT(M,N)$ or $RT(M,N)$.

\begin{lemma} \label{lem-Zslopes}
Let $\Delta$ be a triangle as in Theorem~\ref{thm-main-B}. Then, after a shear transformation, $Z$ is either a triangle with lower edges having integer slopes $0$ and $K$,  or $Z$ is a $4$-gon with lower edges having integer slopes $0, L, K$, where $L= 1$ or  $L= K-1$ and the edge with slope $L$ has lattice length $1$.
\end{lemma}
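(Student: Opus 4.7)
The plan is to read off the structure of $Z$ from a tight area budget: the preceding lemma gives $\operatorname{Area}(Z) = (m^2-1)/2$, so combined with the hypothesis $\operatorname{Area}(\Delta)\le m^2/2$ we have $\operatorname{Area}(G)\le 1/2$ for the gap $G := \Delta\setminus Z^\circ$. After the shear, label $Z$'s lower edges $E_1,\dots,E_n$ by slopes $0 = k_1 < k_2 < \cdots < k_n$ (integrality coming from the preceding lemma's perimeter identity) and lattice lengths $\ell_1,\dots,\ell_n$, and write $P_0,\dots,P_n$ for the corresponding lower vertices. Minimality of $\Delta$ forces its right edge to contain $E_n$, so it has integer slope $K := k_n$ and bottom vertex $Q = (m-h/K,\,0)$.

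I triangulate $G$ from $Q$ as $G = T_2\cup\cdots\cup T_{n-1}$ with $T_i = QP_{i-1}P_i$; convexity of $Z$ together with $Q\notin Z$ makes every diagonal $QP_j$ stay inside $\Delta\setminus Z^\circ$, so this triangulation is legitimate. A shoelace computation yields
\[
  \operatorname{Area}(T_i) = \frac{\ell_i}{2K}\Bigl[k_i\sum_{j\ge i}\ell_j(K-k_j) + (K-k_i)\sum_{j<i}k_j\ell_j\Bigr],
\]
a sum of nonnegative terms. For $n\ge 4$ we have $k_2 < k_{n-1}$ strictly; retaining only the $j=2$ and $j=n-1$ contributions in $T_2$ and $T_{n-1}$ together with $\ell_2,\ell_{n-1}\ge 1$ gives
\[
  2K\cdot\operatorname{Area}(G) \;\ge\; K(3k_2+k_{n-1})-(k_2+k_{n-1})^2.
\]
To contradict $\operatorname{Area}(G)\le 1/2$ I need this to strictly exceed $K$; using $K\ge k_{n-1}+1$, the required inequality reduces after expansion to $k_2(k_{n-1}+3-k_2) > 1$, immediate from $k_2\ge 1$ and $k_{n-1}-k_2\ge 1$. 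Hence $n\le 3$.

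For $n=3$ the triangulation collapses to the single triangle $T_2$, giving $\operatorname{Area}(G) = \ell_2^2 k_2(K-k_2)/(2K)$. The bound $\ell_2^2 k_2(K-k_2)\le K$ forces $\ell_2 = 1$ (any $\ell_2\ge 2$ would yield $4(K-1)\le K$, absurd), and then $k_2(K-k_2)\le K$ rules out $k_2\in\{2,\ldots,K-2\}$ for all $K\ge 5$ since the minimum on that range is $2(K-2)>K$. The one leftover case is $K=4,\,k_2=2$, which I eliminate via $\gcd(m,h)=1$: with $\ell_2 = 1$, the equation $2\operatorname{Area}(Z)=m^2-1$ collapses to $(\ell_1-\ell_3)^2 = 0$, whence $\ell_1 = \ell_3$, $m=2\ell_1+1$, and $h=2m$, contradicting $\gcd(m,h)=1$. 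Thus $L := k_2\in\{1, K-1\}$. The main obstacle is the $n\ge 4$ case, where the precise choice to bound $\operatorname{Area}(G)$ from below by just the two outermost triangles $T_2,T_{n-1}$, keeping only their extreme summands, is what turns an unwieldy area sum into a clean quadratic inequality in $k_2,k_{n-1},K$.
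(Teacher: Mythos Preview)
Your proof is correct. The $n=3$ analysis matches the paper's almost exactly: both compute the gap triangle's area as $\ell_2^2 L(K-L)/(2K)$, deduce $\ell_2=1$ from the bound, reduce to $L(K-L)\le K$, and dispatch the residual case $K=4,\,L=2$ via $\gcd(m,h)=1$.

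Where you diverge is in ruling out $n\ge 4$. The paper does this in one sentence: if $Z$ had four or more lower edges then $\Delta$ would contain a lattice point on the $x$-axis (namely $(\ell_1+1,0)$) not lying in $Z$, contradicting that $Z$ is the convex hull of all lattice points in $\Delta$. This is slicker but, as stated, not self-evident---the inequality $r'\ge \ell_1+1$ needed for $(\ell_1+1,0)\in\Delta$ is not automatic from $n\ge 4$ alone and in fact relies implicitly on the same area budget you exploit. Your route makes this explicit: you triangulate the gap from $Q$, compute each $\operatorname{Area}(T_i)$ by shoelace, and extract the clean quadratic bound $2K\cdot\operatorname{Area}(G)\ge K(3k_2+k_{n-1})-(k_2+k_{n-1})^2>K$. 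This is more work but entirely transparent, and the star-shapedness of the gap from $Q$ (needed for your triangulation to be valid) follows because every ray from $Q$ into $\Delta$ exits $Z$ through the top edge, which coincides with the top edge of $\Delta$. So your approach trades a terse geometric observation for a fully quantitative area estimate; both reach the same conclusion, and yours arguably fills in a step the paper leaves to the reader.
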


\begin{proof}
Assume that $Z$ is not a triangle, and  let its lower edges have integer slopes, with the first edge being horizontal.  Then $Z$ cannot have more than $3$ edges on its lower boundary. Otherwise the triangle $\Delta$ contains a lattice point on the $x$-axis that does not lie in $Z$. Thus, $Z$ must be a $4$-gon with lower edges having integer slopes $0, L, K$. 

We know that $Z$ has area $\frac{m^2-1}{2}$ and $\Delta$ has area $\leq \frac{m^2}{2}$. This implies that the edge with slope $L$ has lattice length $1$, and the area of the small triangle removed from $\Delta$ to obtain $Z$ is at most $\frac{1}{2}$. This small triangle has base $1-L/K$ and height $L$, giving the inequality
\[ \left(1-\frac{L}{K}\right)L \leq 1.\]
  The only integer solutions $K>L>0$ to this are $K>L=1$, $L=K-1>0$ and $L=2, K=4$. We need to rule out the last case. Here $Z$ has vertices $(0,0)$, $(a,0)$, $(a+1, 2)$ and $(m,h)=(a+1+b, 2+4b)$ for some integers $a,b>0$. Twice the area of $\Delta$ is
  \[ m^2 = \left(a+\frac{1}{2}\right) h.\]
This equality is equivalent to $a=b$. However, in that case $m=2a+1$ and $h=4a+2$ are not relatively prime. 
\end{proof}

Combining Lemma~\ref{lem-Zslopes} with Lemma~\ref{lem-slopes}, we get that the triangle $\Delta$ has to be isomorphic to either $IT(M,N)$ or $RT(M,N)$ for some $M,N$. This finishes the proof of Theorem~\ref{thm-main-B}.

\section{Mori Dream Spaces}

In this section we will consider triangles $\Delta$ that support a negative curve $C$. We start with general triangles and then specialize to the case $IT(M,N)$, $RT(M,N)$. 

\subsection{General triangles.}

Consider a triangle $\Delta$  of the form $(\dagger)$. We assume that $\Delta$ supports a negative curve $C^0$ that vanishes to order $m$ at $e$. If $C^0$ is defined by a polynomial $\xi$, we further assume that the lattice points $(0,0)$ and $(m,h)$ lie in the Newton polygon of $\xi$.  We make $\Delta$ larger by increasing its base:
\[ \Delta_{\alpha,\beta}: (-\alpha, 0), (b+\beta,0), (m,h), \quad \alpha,\beta \geq 0.\]
Let $X=\Bl_e X_{\Delta_{\alpha,\beta}}$. We assume that $\alpha,\beta\geq 0$ are small enough so that the strict transform $C$ of $C^0$ is a negative curve in $X$. This condition is equivalent to the area of $\Delta_{\alpha,\beta}$ being $\leq \frac{m^2}{2}$, which is the same as
\[ \alpha+\beta \leq \frac{m^2}{h} -b.\]
We study curve classes in $X$.
Let $D_0$ be in the class $H'-hE$ where $H'$ corresponds to the triangle with vertices:
\[ \Delta_{\alpha,\beta}': (0, 0), (m,0), \frac{m}{b+\alpha+\beta} (m+\alpha,h).\]

\begin{lemma}
$D_0 \cdot C = 0$.
\end{lemma}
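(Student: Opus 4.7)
The plan is to reduce the intersection $D_0 \cdot C$ to an intersection number on the toric surface $X_{\Delta_{\alpha,\beta}}$, and then exploit the fact that the polytope $\Delta'_{\alpha,\beta}$ is, up to translation, a positive rational rescaling of $\Delta_{\alpha,\beta}$.

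First, I would record the intersection numbers $H \cdot E = H' \cdot E = 0$ and $E^2 = -1$; these follow because $H$ and $H'$ are pulled back from $X_{\Delta_{\alpha,\beta}}$ along the blowup $\pi \colon X \to X_{\Delta_{\alpha,\beta}}$, while $e$ is a smooth point of the toric surface with $E$ the resulting exceptional curve. Expanding $C = H - mE$ and $D_0 = H' - hE$ gives
\[ D_0 \cdot C \;=\; H \cdot H' + mh\, E^2 \;=\; H \cdot H' - mh,\]
so it suffices to prove $H \cdot H' = mh$.

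Next I would observe that $\Delta'_{\alpha,\beta}$ is obtained from $\Delta_{\alpha,\beta}$ by first translating by $(\alpha, 0)$ and then scaling by $\lambda := \frac{m}{b+\alpha+\beta}$. Indeed, the translation sends $(-\alpha,0),(b+\beta,0),(m,h)$ to $(0,0),(b+\alpha+\beta,0),(m+\alpha,h)$, and multiplying by $\lambda$ sends the base to the segment from $(0,0)$ to $(m,0)$ and the top vertex to $\lambda(m+\alpha, h)$, which matches the definition of $\Delta'_{\alpha,\beta}$. In particular $\Delta'_{\alpha,\beta}$ has the same normal fan as $\Delta_{\alpha,\beta}$, so $H'$ is a well-defined divisor class on $X_{\Delta_{\alpha,\beta}}$. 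Since translation by a vector in $M_{\mathbb{Q}}$ induces $\mathbb{Q}$-linear equivalence of torus-invariant $\mathbb{Q}$-Cartier divisors, and positive rational scaling of a polytope rescales the associated divisor class, we obtain $H' \equiv \lambda H$ in $\Pic(X_{\Delta_{\alpha,\beta}}) \otimes \mathbb{Q}$. Therefore
\[ H \cdot H' \;=\; \lambda\, H^2 \;=\; \lambda \cdot 2\operatorname{Area}(\Delta_{\alpha,\beta}) \;=\; \frac{m}{b+\alpha+\beta} \cdot h(b+\alpha+\beta) \;=\; mh,\]
which gives $D_0 \cdot C = 0$.

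There is no serious obstacle here: the entire argument is essentially a remark about the geometry of the polytopes. The only step requiring a moment's care is justifying why $H' \sim_{\mathbb{Q}} \lambda H$, which is just the statement that for a complete toric surface the class of a $\mathbb{Q}$-Cartier torus-invariant divisor depends only on the polytope up to translation, and scales linearly under dilation of the polytope.
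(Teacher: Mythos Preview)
Your proposal is correct and follows essentially the same route as the paper: expand $D_0\cdot C = H\cdot H' - mh$ and compute $H\cdot H' = mh$ using that $H'$ is a rational multiple of $H$ in the rank-one group $\Pic(X_{\Delta_{\alpha,\beta}})\otimes\QQ$. The paper records the scaling factor as $b'/b$ with $b'=m$, while you obtain it as $\lambda = m/(b+\alpha+\beta)$ from the explicit dilation $\Delta'_{\alpha,\beta} = \lambda\bigl(\Delta_{\alpha,\beta}+(\alpha,0)\bigr)$; either way the product is $mh$.
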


\begin{proof}
Let the class of $C$ be $H-mE$. Here $H^2$ equals twice the area of $\Delta$, which is $bh$. When we scale the triangle $\Delta$ to $\Delta'$ with base of length $b'$, then the corresponding classes multiply as
\[ H\cdot H' = H \cdot \frac{b'}{b} H = b'h.\]
Now
\[ C\cdot D_0 = (H-mE)(H'-hE) = H\cdot H' - mh = 0.\]
\end{proof}

Cutkosky \cite{Cutkosky} has proved (see also \cite{GGK1}) that the variety $X=\Bl_e X_{\Delta_{\alpha,\beta}}$ is a MDS if and only if there exists a divisor $D$ in the class $\mu D_0$ for some integer $\mu > 0$ such that $C\cap D = \emptyset$. This intersection property is equivalent to $D$ not having $C$ as a component, which can be checked by finding a vertex of the triangle $\Delta_{\alpha,\beta}$ that does not lie in the Newton polygon of the polynomial defining $C$, but the corresponding vertex of $\Delta_{\alpha,\beta}'$ lies in the Newton polygon of the polynomial defining $D$. Then $C$ passes through the corresponding $T$-fixed point, but $D$ does not.

\begin{lemma}
When $\alpha=0, \beta\geq 0$ then $X=\Bl_e X_{\Delta_{0,\beta}}$ is a MDS.
\end{lemma}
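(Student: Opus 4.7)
The plan is to verify Cutkosky's criterion directly by exhibiting an explicit effective divisor $D$ in the class $D_0=H'-hE$ that shares no irreducible component with $C$; since $D\cdot C=D_0\cdot C=0$ (the preceding lemma) and both divisors are effective, this automatically forces $D\cap C=\emptyset$. With $\mu=1$, I would take
\[
D(x,y)=\begin{cases}x^m(1-y)^h & \text{if } m\le h,\\ x^{m-h}(x-1)^h & \text{if } m\ge h,\end{cases}
\]
the two coinciding (up to a unit) when $m=h$.

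The first step, and the only place where the hypothesis $\alpha=0$ enters essentially, is to check that the chosen $D$ is a section of $\O_X(H'-hE)$. Consider the case $m\le h$. The support of $D=x^m(1-y)^h$ is the vertical segment $\{m\}\times\{0,1,\ldots,h\}$. Because $\alpha=0$, the left edge of $\Delta'_{0,\beta}$ has slope exactly $h/m$, and the area constraint $\beta\le m^2/h-b$ combined with $m\le h$ gives $b+\beta\le m$, so this left edge extends past $x=m$ and reaches exactly height $h$ there. Hence $\{m\}\times[0,h]\subset\Delta'_{0,\beta}$ and the Newton polygon of $D$ fits. Since $x^m$ is a unit at $e$ while $(1-y)^h$ vanishes to order $h$ along $y=1$, the polynomial $D$ vanishes to order exactly $h$ at $e$, confirming that it represents a section of $D_0$. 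In the remaining case $m\ge h$, the support $\{m-h,\ldots,m\}\times\{0\}$ of $D=x^{m-h}(x-1)^h$ lies on the bottom edge of $\Delta'_{0,\beta}$, which contains all lattice points $\{0,\ldots,m\}\times\{0\}$ regardless of $\beta$, and $(x-1)^h$ provides vanishing to order $h$ at $e$.

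The second step is immediate: the irreducible factors of $D$ lie among the characters $x,\,y-1,\,x-1$, each of whose Newton polygon is either a single lattice point or a unit segment. On the other hand, $\xi$ is irreducible and its Newton polygon contains both $(0,0)$ and $(m,h)$ with $m,h\ge 1$, so $\xi$ cannot coincide with any of $x,\,y-1,\,x-1$. Therefore no irreducible component of $\operatorname{div}(D)$ equals $C$; together with $D\cdot C=0$ and effectivity, this forces $D\cap C=\emptyset$, and Cutkosky's criterion concludes that $X$ is a MDS. The construction in the case $m\le h$ breaks for $\alpha>0$, since then the left edge of $\Delta'_{\alpha,\beta}$ has the smaller slope $h/(m+\alpha)$, the column at $x=m$ only reaches height $mh/(m+\alpha)<h$, and $x^m(1-y)^h$ no longer has Newton polygon inside $\Delta'_{\alpha,\beta}$; this is exactly consistent with part (2) of Theorem~\ref{thm-MDS}, where these families yield non-MDS examples for $\alpha>0$.
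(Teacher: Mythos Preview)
Your proof is correct and uses essentially the same construction as the paper, which simply writes down $D = x^m(1-y)^h$ as a section of $\cO_X(D_0)$ having no component equal to $C$. Your additional case $D = x^{m-h}(x-1)^h$ for $m>h$ is in fact needed for the lemma as stated for arbitrary $(\dagger)$ triangles (when $h<m$ and $b+\beta>m$ the vertical segment $\{m\}\times[0,h]$ no longer lies in $\Delta'_{0,\beta}$), though for the triangles $IT(M,N)$ and $RT(M,N)$ to which the paper later specializes one always has $h>m$, so the paper's single polynomial suffices there.
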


\begin{proof} 
The polynomial $x^m (1-y)^h$ is supported in $\Delta'_{0,\beta}$ and vanishes to order $h$ at $e$. This defines the divisor $D$ disjoint from $C$.
\end{proof}

\begin{lemma} \label{lem-nonMDS}
Assume that the slope of the right edge of $\Delta$ is an integer $K$ such that 
\[ h\left(b+ \frac{1}{K}\right)> m^2.\]
Then $X=\Bl_e X_{\Delta_{\alpha,\beta}}$ is not a MDS for any $\alpha,\beta > 0$ such that the curve $C$ has non-positive self-intersection.
\end{lemma}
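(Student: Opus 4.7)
The plan is to invoke the Cutkosky criterion recalled just before the statement and argue by contradiction. Suppose for some integer $\mu>0$ there exists $D\in|\mu D_0|$ disjoint from $C$, cut out by a Laurent polynomial $\eta$ supported in $\mu\Delta'_{\alpha,\beta}$ and vanishing to order $\mu h$ at $e=(1,1)$. The vertex check recalled in the paragraph preceding the lemma, applied to the two lower vertices $(-\alpha,0)$ and $(b+\beta,0)$ of $\Delta_{\alpha,\beta}$ (neither of which lies in the Newton polygon of $\xi$ once $\alpha,\beta>0$), forces the Newton polygon of $\eta$ to contain at least one of the two lower vertices $(0,0)$ or $(\mu m,0)$ of $\mu\Delta'_{\alpha,\beta}$. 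I will outline how to rule out the presence of $(\mu m,0)$; the case of $(0,0)$ is handled analogously, using the corresponding geometric input coming from the left edge.

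To extract the geometric content of the hypothesis, observe that $C\cdot C\le 0$ gives $\alpha+\beta\le m^2/h-b$, and combining this with $h(b+1/K)>m^2$ yields the strict bound $\alpha+\beta<1/K$. Thus the bottom-right vertex $(b+\beta,0)$ of $\Delta_{\alpha,\beta}$ sits within horizontal distance less than $1/K$ of the foot $(b,0)$ of the integer-slope edge of $\Delta$. Performing the shear $(x,y)\mapsto(x,y-Kx)$ used in the proof of Proposition~\ref{prop-neg-curve}, which preserves lattice points and columns, flattens the right edge of $\Delta$ to slope $0$ and turns the right edge of $\Delta_{\alpha,\beta}$ into a line of very small positive slope. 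In these coordinates the support of $\eta$ lies inside a sheared copy of $\mu\Delta$ together with a thin triangular slab glued on the right, of base less than $\mu/K$.

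The heart of the argument is then a lattice count. The vanishing of $\eta$ to order $\mu h$ at $e$ imposes $\binom{\mu h+1}{2}$ linear conditions on its coefficients, while the available monomials are indexed by the lattice points of the sheared $\mu\Delta'_{\alpha,\beta}$. My plan is to combine Pick's formula with the column count of Lemma~\ref{column.point.count} to show that, under the hypothesis $h(b+1/K)>m^2$, the number of these lattice points is strictly less than $\binom{\mu h+1}{2}+1$. Applied through Lemma~\ref{interpolation-lemma}, this would leave no room for a nonzero $\eta$ that additionally carries a monomial at the corner $(\mu m,0)$, yielding the desired contradiction.

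The main obstacle I anticipate is upgrading the count from an asymptotic-in-$\mu$ statement (where the inequality is controlled by the sign of $D_0^2$) to an inequality valid for \emph{every} $\mu\ge 1$, since the boundary contribution to the lattice point count of $\mu\Delta'_{\alpha,\beta}$ is only linear in $\mu$. The most natural route is an inductive reduction $\eta=\xi\cdot q+r$ by monomial-order division along the right edge, arranged so that the support of $r$ fits in a strictly smaller triangle of the same form $(\dagger)$ to which the hypothesis $h(b+1/K)>m^2$ still applies; iterating then lowers $\mu$ until the vertex condition at $(\mu m,0)$ is manifestly unsatisfiable. Verifying that the residual $r$ at each stage inherits the combinatorial data needed to continue the descent, and that the base case is incompatible with a corner monomial, is the delicate point.
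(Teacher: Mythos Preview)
Your proposal has a genuine gap, and the counting strategy cannot be repaired. When $C^2<0$ you have $D_0^2>0$: indeed $2\operatorname{Area}(\Delta'_{\alpha,\beta})=\frac{m^2h}{b+\alpha+\beta}$ and $b+\alpha+\beta\le m^2/h$, so $D_0^2=h\bigl(\tfrac{m^2}{b+\alpha+\beta}-h\bigr)\ge 0$, with equality only when $C^2=0$. Hence for large $\mu$ the triangle $\mu\Delta'_{\alpha,\beta}$ contains \emph{more} than $\binom{\mu h+1}{2}$ lattice points, not fewer. Sections of $|\mu D_0|$ exist in abundance; what must fail is that any such section is disjoint from $C$. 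Your asymptotic heuristic is not merely hard to make effective---it points in the wrong direction. The monomial-order division you sketch at the end is closer in spirit but lacks the one ingredient that makes a single reduction step decisive.

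That ingredient is geometric: since $D_0\cdot C=0$, the line bundle $\cO_X(\mu D_0)$ restricts to the trivial bundle on $C$, so any two global sections become proportional on $C$. In particular your $\eta$ and the explicit section $x^{\mu m}(1-y)^{\mu h}$ (supported in $\mu\Delta'_{0,0}\supset\mu\Delta'_{\alpha,\beta}$) satisfy $\eta\equiv c\,x^{\mu m}(1-y)^{\mu h}\pmod{\xi}$ for some nonzero constant $c$. Writing $\eta=c\,x^{\mu m}(1-y)^{\mu h}+\xi g$, the quotient $g$ vanishes to order at least $\mu h-m$ and is supported in a triangle with base $\mu m-b$. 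Now the hypotheses $\alpha,\beta>0$ are used separately: $\beta>0$ forces the Newton polygon of $g$ off the right edge, shrinking its base to $\mu m-b-\tfrac{1}{K}$ (this is where the integrality of $K$ enters); $\alpha>0$ forces, by looking at the left edge, that $\xi\nmid g$. One then computes directly
\[
\overline{D}\cdot C \le \Bigl(\mu m - b - \tfrac{1}{K}\Bigr)h-(\mu h-m)m = m^2 - h\Bigl(b+\tfrac{1}{K}\Bigr) < 0,
\]
so $\xi$ must divide $g$, a contradiction. There is no induction on $\mu$ and no lattice count; the hypothesis $h(b+\tfrac{1}{K})>m^2$ enters only in this last inequality.
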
 

\begin{proof}
Suppose we have a divisor $D$ in the class $\mu D_0$ that is disjoint from $C$.  Let $D$ be defined by a polynomial $\zeta$ supported in $\mu \Delta_{\alpha,\beta}'$. Then $\zeta$ is also supported in the larger triangle $\mu \Delta_{0,0}'$ where we have set $\alpha=\beta=0$. The triangle $\mu \Delta_{0,0}'$ supports another polynomial $x^{\mu m}(1-y)^{\mu h}$. We will use the two polynomials to get a contradiction.

Since $D\cdot C = 0$, the sheaf $\cO_X(D)$ restricts to the trivial sheaf on $C$. Hence any two global sections of the sheaf must be constant multiples of each other when restricted to $C$. Let $\xi$ be the polynomial defining $C$. Then
\[ \zeta \equiv c x^{\mu m}(1-y)^{\mu h} \mod \xi\]
for some constant $c\neq 0$. Let
\begin{equation} \label{eqn-zeta} \zeta  = c x^{\mu m}(1-y)^{\mu h} + \xi g , \end{equation}
where $g$ is a polynomial that vanishes at $e$ to order at least $\mu h - m$ and is supported in the triangle $\Delta''$ with base $\mu m - b$ and sides parallel to $\Delta_{0,0}$. However, since $D$ was assumed to exist when $\beta>0$, the right edge of the triangle $\Delta''$ should not intersect the Newton polygon of $g$. Thus, $g$ is in fact supported in a smaller triangle, still with sides parallel to $\Delta_{0,0}$. Since the right side of $\Delta_{0,0}$ has integer slope $K$, we can take the smaller triangle by shortening the base of $\Delta''$ by $1/K$. In summary, $g$ is supported in the triangle with base $[0,\mu m - b - 1/K]$ and must vanish to order at least $\mu h - m$ at $e$.  

Consider the terms in Equation (\ref{eqn-zeta}) lying on the left edge of $\Delta_{0,0}$. The Newton polygon of $\zeta$ intersects the left edge only at $(0,0)$  because $\alpha>0$. The Newton polygon of $x^{\mu m}(1-y)^{\mu h}$ intersect the left edge at $\mu(m,h)$ only. The Newton polygon of $\xi$ has two lattice points on the left edge, $(0,0)$ and $(m,h)$.  Equation (\ref{eqn-zeta}) now implies that  $g$ is not divisible by $\xi$. (By the same reason why a polynomial $1-ax^\mu$ is not divisible by $(1-bx)^2$ when $a,b\neq 0$ and the  characteristic is $0$.)

To get a contradiction to the existence of $D$, we now check that $\overline{D}\cdot C < 0$  in $\Bl_e X_{\Delta_{0,0}}$, where $\overline{D}$ is defined by the polynomial $g$. This implies that $\xi$ must divide $g$, which is impossible. We have 
\[ \overline{D}\cdot C \leq \left(\mu m - b - \frac{1}{K}\right)h - (\mu h - m) m = -\left(b+\frac{1}{K}\right)h+ m^2.\]
The last quantity is negative by assumption.
\end{proof}

\subsection{The triangles $IT(M,N)$ and $RT(M,N)$.}

We now specialize to the case of triangles described in Theorem~\ref{thm-main-A}.  Let $\xi^{int}_{M,N}$ and $\xi^{rat}_{M,N}$ be the polynomials defining negative curves in the two families of triangles, normalized to have constant term equal to $1$.

The triangles $IT(M,N)$ and $RT(M,N)$ are of the form $(\dagger)$. In particular, the varieties $X$ are MDS when $\alpha=0$. Let us prove the same for $\beta=0$.

\begin{lemma} Let $\Delta$ be either $IT(M,N)$ with $N>0$ or $RT(M,N)$ with $M>0$, and let $X$ be constructed by choosing $\alpha,\beta \geq 0$. If $\beta=0$ then $X$ is a MDS.
\end{lemma}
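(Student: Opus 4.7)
My plan is to apply Cutkosky's criterion, recalled at the start of this section. Explicitly, it suffices to exhibit a positive integer $\mu$ and a polynomial $\zeta$ supported in $\mu\Delta'_{\alpha,0}$, vanishing at $e$ to order $\mu h$, such that the divisor $D = V(\zeta)$ does not contain $C$ as a component. By the vertex form of the criterion, this is automatic once we find a vertex of $\Delta_{\alpha,0}$ outside $\text{supp}(\xi)$ whose corresponding vertex in $\Delta'_{\alpha,0}$ lies in $\text{supp}(\zeta)$.

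For $\alpha>0$, the new vertex $(-\alpha,0)$ of $\Delta_{\alpha,0}$ does not lie in $\text{supp}(\xi)$: the defining polynomial $\xi$ (either $\xi^{int}_{M,N}$ or $\xi^{rat}_{M,N}$) is supported in the original triangle $\Delta$ whose leftmost base vertex is $(0,0)$. Since $\Delta_{\alpha,0}$ and $\Delta'_{\alpha,0}$ share the same normal fan (their slopes of edges coincide pairwise), their vertices are in canonical bijection; the vertex $(-\alpha,0)$ of $\Delta_{\alpha,0}$ corresponds to the vertex $(0,0)$ of $\Delta'_{\alpha,0}$. Thus the problem reduces to producing $\zeta$ supported in some $\mu\Delta'_{\alpha,0}$, vanishing to order $\mu h$ at $e$, and with nonzero constant term. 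The boundary case $\alpha=0$ is already handled by the previous lemma via $\zeta = x^m(1-y)^h$.

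The construction I would try is an explicit polynomial of the form
\[
\zeta \;=\; \prod_i \bigl(1 - x^{c_i}y^{d_i}\bigr)^{N_i},
\]
with $\sum_i N_i = \mu h$, where each factor vanishes simply at $e$ and the pairs $(c_i,d_i)\in\ZZ_{>0}^{2}$ are chosen so that the Minkowski-sum Newton polygon of $\zeta$ (a zonotope generated by the segments from the origin to $(c_i N_i,d_i N_i)$) fits inside $\mu\Delta'_{\alpha,0}$. By construction $(0,0)$ is a vertex of this Newton polygon and the constant term of $\zeta$ equals $1$; the directions $(c_i,d_i)$ should be chosen so their slopes interpolate between the slopes of the base, of the left edge (slope $h/(m+\alpha)$), and of the right edge (slope $K$) of $\Delta'_{\alpha,0}$, so that the zonotope stays in the triangle.

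The main technical obstacle is arranging the combined Newton polygon to actually fit inside $\mu\Delta'_{\alpha,0}$: a single factor has a support segment of length of order $\mu h$, which is longer than the triangle's diameter of order $\mu m$, so one must combine several factors. Feasibility is guaranteed for $\alpha < 1/(NK)$ (respectively $\alpha < 1/(K(M+N))$ in the rational case) by the identity $(M+N)^2 = KMN+1$, which implies $\text{Area}(\Delta'_{\alpha,0}) - h^2/2 > 0$ in this range; a lattice-point count via Pick's theorem then ensures the space of candidate $\zeta$'s is positive-dimensional for $\mu$ sufficiently large. Producing an explicit admissible combination, and verifying the nonvanishing of the constant term, can be done using the algebraic relations between $\xi^{int}_{M,N}$ and $\xi^{rat}_{M,N}$ developed in Section~\ref{sec-rec} together with the involutions of Theorem~\ref{thm-rec} to propagate a base-case construction. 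The rational-triangle case $RT(M,N)$ is handled by the identical argument, with $m = M$ and $h = M+N$ in place of $m = M+N$ and $h = KN$.
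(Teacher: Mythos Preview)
Your setup is correct: via Cutkosky's criterion it suffices to exhibit, in some class $\mu[D_0]$, a polynomial $\zeta$ supported in $\mu\Delta'_{\alpha,0}$ and vanishing to order $\mu h$ at $e$, with $(0,0)$ in its Newton polygon. But the proposal does not actually produce such a $\zeta$. The zonotope ansatz $\prod_i(1-x^{c_i}y^{d_i})^{N_i}$ is never carried out, and the Pick-theorem dimension count is not enough: knowing that the linear system $|\mu D_0|$ is positive-dimensional for large $\mu$ does \emph{not} exclude $C$ from its base locus, which is precisely what must be shown. Your final sentence defers the construction to ``the algebraic relations \ldots\ in Section~\ref{sec-rec}'' without saying which polynomial to take, so the key step is missing.

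The paper's proof is a one-line explicit construction with $\mu=1$. In the integral case $\Delta=IT(M,N)$, one takes
\[
\zeta=\bigl(\xi^{rat}_{\tau(M,N)}\bigr)^K,\qquad \tau(M,N)=\iota_1\iota_0(M,N)=(N,(K-2)N-M),
\]
which is supported in $K\cdot RT(\tau(M,N))\subset\Delta'_{\alpha,0}$, vanishes to order $KN=h$ at $e$, and has constant term $1$. In the rational case $\Delta=RT(M,N)$, one takes $\zeta=\xi^{int}_{M,N}$ itself: it is supported in $IT(M,N)\subset\Delta'_{\alpha,0}$, vanishes to order $M+N=h$ at $e$, and has constant term $1$. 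In both cases the nonzero constant term shows $(0,0)$ is in the Newton polygon, so $D$ misses the torus-fixed point through which $C$ passes, and $C\cap D=\emptyset$. The insight you are missing is that the required ``other curve'' in each family is furnished by the negative-curve polynomial of the \emph{other} family at the parameter $\tau(M,N)$.
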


\begin{proof}
Consider the class $[D_0]$ defined in the previous subsection. In the integral case it has the form $H'-KN E$, where $H'$ corresponds to the triangle with base $[0,M+N]$. The polynomial 
\[ \big(\xi^{rat}_{N, (K-1)N-(M+N)} \big)^K = \big(\xi^{rat}_{\iota_1 \iota_0(M,N)} \big)^K\]
defines a divisor $D$ in the class $[D_0]$, disjoint from $C$.

In the rational case the class $[D_0]$ has the form $H'-(M+N)E$, where $H'$ corresponds to the triangle with base $M$. The polynomial  $\xi^{int}_{M,N}$ defines a divisor $D$ in the class $[D_0]$ that is disjoint from $C$.
\end{proof}

Let us now check when Lemma~\ref{lem-nonMDS} applies.

\begin{lemma}
Let $X$ be defined by choosing $\alpha,\beta>0$. Then $X$ is not a MDS when $N>1$ in the integral case and $M+N >1$ in the rational case.
\end{lemma}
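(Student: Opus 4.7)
The plan is to reduce the statement directly to Lemma~\ref{lem-nonMDS}. Both families of triangles $IT(M,N)$ and $RT(M,N)$ are of the form $(\dagger)$, as noted at the start of Section~\ref{section.proofs.Thm1.1.A.Thm1.2}; Theorem~\ref{thm-main-A} provides the required negative curve $C^0$ vanishing to order $m$ at $e$; and the analysis around Lemma~\ref{lem-Newt} (with its integral-case analogue Lemma~\ref{lem-IT}) ensures that the two corner lattice points $(0,0)$ and $(m,h)$ belong to the Newton polygon of the defining polynomial $\xi$. So all the background hypotheses of Lemma~\ref{lem-nonMDS} are in force, and it suffices to check its single numerical hypothesis $h(b+1/K) > m^2$ in each case.

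For $IT(M,N)$ the vertices $(0,0), (M+N, KN), (M,0)$ give $b=M$, $m=M+N$, $h=KN$, with right-edge slope $K$. A direct computation yields
\[ h\!\left(b+\tfrac{1}{K}\right) = KMN + N, \qquad m^2 = (M+N)^2 = KMN + 1 \]
by equation~(\ref{eqn-MN}). Hence $h(b+1/K) > m^2$ is equivalent to $N > 1$, which is precisely the hypothesis in the integral case.

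For $RT(M,N)$ the vertices $(0,0), (M,M+N), (M-(M+N)/K, 0)$ give $b = M - (M+N)/K$, $m=M$, $h=M+N$, with right-edge slope $K$. Clearing $K$ from $h(b+1/K) > m^2$ yields $KMN > (M+N)(M+N-1)$. Using equation~(\ref{eqn-MN}) once more,
\[ KMN = (M+N)^2 - 1 = (M+N-1)(M+N+1), \]
so the inequality becomes $(M+N-1)(M+N+1) > (M+N)(M+N-1)$; whenever $M+N > 1$, the factor $M+N-1$ is positive and the inequality reduces to $M+N+1 > M+N$, which is automatic.

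I do not expect any genuine obstacle: the real work was already packaged into Lemma~\ref{lem-nonMDS}, and what remains is bookkeeping with the Pell-like identity $(M+N)^2 = KMN + 1$. The only mild subtlety is to confirm that the background hypotheses of Lemma~\ref{lem-nonMDS} — in particular that $(0,0)$ and $(m,h)$ lie in the Newton polygon of $\xi$ — hold for both families, but these were established during the proof of Theorem~\ref{thm-main-A} exactly so that the Newton polygon determines the supporting triangle.
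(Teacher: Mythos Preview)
Your proposal is correct and follows essentially the same approach as the paper: both reduce to Lemma~\ref{lem-nonMDS} and verify the numerical inequality $h(b+1/K)>m^2$ using the Pell-type identity $(M+N)^2=KMN+1$, obtaining $N>1$ in the integral case and $M+N>1$ in the rational case. One minor citation point: the fact that $(0,0)$ and $(m,h)$ lie in the Newton polygon of $\xi$ is established already in the irreducibility argument of Section~\ref{dagger} (Proposition~\ref{prop-neg-curve} via Lemma~\ref{lem-irred-curve}), not via Lemma~\ref{lem-Newt}, which is concerned with the \emph{adjacent} edges of the Newton polygon.
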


\begin{proof}
In the integral case $m= M+N$, $h=NK$ and $b=M$. The inequality in Lemma~\ref{lem-nonMDS} becomes
\[ NK\left(M+\frac{1}{K}\right)>(M+N)^2 = MNK+1.\]
This is equivalent to $N>1$.

In the rational case $m=M$, $h=M+N$ and $b=M-\frac{M+N}{K}$. The inequality now is
\[ (M+N)\left(M- \frac{M+N-1}{K}\right) > M^2.\]
This is equivalent to $M+N>1$.
\end{proof} 

This finishes the proof of Theorem~\ref{thm-MDS}.

\section{Recurrence relations}

In this section we give explicit computations of the pairs $(M,N)$ satisfying equation (\ref{eqn-MN}).  We then show how to find the polynomials $\xi^{int}_{M,N}$ and $\xi^{rat}_{M,N}$ inductively. 

We will consider $K\geq 4$ and pairs $M>N$. The other pairs can be obtained by switching $M$ and $N$. Let $\tau$ be the transformation $\tau(M,N) = \iota_1 \iota_0(M,N) = (N,(K-1)N-(M+N))$. Recall from Theorem~\ref{thm-rec} that, for a fixed $K$, all $(M,N)$ can be obtained by starting with $(1,0)$ and applying $\iota_1, \iota_0$ alternately. Hence all solutions $M>N$  can be reduced to $(1,0)$ by applying $\tau$:
\[  (1,0) \stackrel{\tau}{\longmapsfrom} (K-2,1) \stackrel{\tau}{\longmapsfrom} ( (K-2)^2 -1, K-2) \stackrel{\tau}{\longmapsfrom} \ldots.\]

\subsection{Finding $(M,N)$.}
When $K=4$ then all solutions $M>N$ have the form $M=N+1$.  We fix $K>4$. Let 
\[ (M_0,N_0) = (1,0),  \quad (M_n, N_n) = \tau^{-n}(M_0,N_0), \quad n>0.\] 
This sequence of pairs can be written in terms of a sequence of numbers:
\[ (M_n, N_n) = (F_{n+1}, F_n),\]
Where $F_0=0, F_1=1$ and 
\[ F_{n+2}=(K-2)F_{n+1}-F_{n}, \quad n\geq 0.\]
We can solve for $F_n$ using eigenvalues. Let 
\[ \lambda_{\pm} = \frac{K-2 \pm \sqrt{(K-2)^2-4}}{2}.\]
Then, for $n\geq0$,
\[ F_n = \frac{\lambda_+^n-\lambda_-^n}{\lambda_+-\lambda_-}.\]
More concretely, for $n>0$,
\[
F_n=  \sum_{i=0}^{n-1} (-1)^i { 2n-1-i \choose i}  K^{n-1-i}. 
\]
From this, one can easily write polynomial expressions for the vertices of all the triangles $IT(M,N)$ and $RT(M,N)$.

Another way to find $M_n,N_n$ is to use the continued fractions expression of $\lambda_+$:
\[ \lambda_+ = (K-2) - \frac{1}{(K-2) - \frac{1}{(K-2) - \ldots}}.\]
Then $M_n/N_n$ is the truncation of this continued fraction. For example,
\[ \frac{M_2}{N_2} = (K-2) - \frac{1}{(K-2)}  = \frac{(K-2)^2-1}{K-2} .\]

\subsection{Finding $\xi^{int}_{M,N}$ and $\xi^{rat}_{M,N}$.}
\label{sec-rec}

 Recall that these polynomials are normalized to have constant term $1$. Let us denote by $\varepsilon^{int}_{M,N}$ and $\varepsilon^{rat}_{M,N}$ the coefficient of the highest degree term in $\xi^{int}_{M,N}$ and $\xi^{rat}_{M,N}$, respectively. We will see below that these coefficients are equal to $\pm 1$.

\begin{lemma} \label{lem-relations}
The polynomials $\xi^{int}_{M,N}$ and $\xi^{rat}_{M,N}$ satisfy the relations
\begin{align*}
 \xi^{int}_{M,N} &= \xi^{rat}_{M,N} \xi^{rat}_{\tau(M,N)} -\varepsilon^{rat}_{M,N} x^M(y-1)^{M+N},\\
 \big(\xi^{rat}_{\tau(M,N)}\big)^K  &=  \xi^{int}_{M,N} \xi^{int}_{\tau(M,N)} -\varepsilon^{int}_{M,N} x^{M+N}(y-1)^{KN}.
\end{align*}
The first equality holds when $M>0$, the second one when $N>0$.
\end{lemma}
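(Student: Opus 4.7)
The plan is to prove both identities by the same template: rewrite each as ``(a product) $-$ (an auxiliary term) $=$ (the third term)'', show that the left-hand side is supported in a small triangle, and then pin down the scalar by matching constant terms. For the first identity I would begin by computing the Newton polygon of $\xi^{rat}_{M,N}\,\xi^{rat}_{\tau(M,N)}$ as the Minkowski sum of $RT(M,N)$ and $RT(\tau(M,N))$. Because the bottoms of both are horizontal and both right edges have slope $K$, these two pairs of parallel edges combine, yielding a quadrilateral with vertices $(0,0)$, $(M,0)$, $(M+N,KN)$, $(M,M+N)$. Cutting along the diagonal from $(0,0)$ to $(M+N,KN)$ decomposes it as $IT(M,N)$ together with an upper triangle $T_1$ with vertices $(0,0),(M+N,KN),(M,M+N)$.

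\textbf{Paragraph 2 (First identity).}

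Equation~(\ref{eqn-MN}) gives $\operatorname{Area}(T_1)=\tfrac12$ and $\gcd(M+N,KMN)=1$, from which one extracts $\gcd(M,N)=\gcd(M+N,KN)=\gcd(N,(K-1)N-M)=1$. So the three edges of $T_1$ have coprime direction vectors, and Pick's formula then rules out any interior lattice points of $T_1$; thus the only lattice point of $T_1$ outside $IT(M,N)$ is $(M,M+N)$. I would then show that $(M,M+N)$ has a unique lattice-point decomposition $(a,b)+(c,d)=(M,M+N)$ with $(a,b)\in RT(M,N)\cap\ZZ^2$ and $(c,d)\in RT(\tau(M,N))\cap\ZZ^2$, namely $(M,M+N)+(0,0)$: the slopes $(M+N)/M$ and $((K-1)N-M)/N$ of the left edges of the two triangles differ by exactly $1/(MN)>0$ (a direct consequence of equation~(\ref{eqn-MN})), so the reflection of $RT(M,N)$ through the midpoint of the segment joining $(0,0)$ to $(M,M+N)$ meets $RT(\tau(M,N))$ only at the origin. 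Consequently the coefficient of the product at $(M,M+N)$ equals $\varepsilon^{rat}_{M,N}\cdot 1$, matching the coefficient of $\varepsilon^{rat}_{M,N}x^M(y-1)^{M+N}$ there; hence $P:=\xi^{rat}_{M,N}\,\xi^{rat}_{\tau(M,N)}-\varepsilon^{rat}_{M,N}x^M(y-1)^{M+N}$ is supported in $IT(M,N)$ and vanishes at $e$ to order $M+N$. By the uniqueness statement at the start of Section~\ref{dagger} (the divisor class associated with $IT(M,N)$ and order $M+N$ has strictly negative self-intersection $-1$), $P=c\,\xi^{int}_{M,N}$, and comparing constant terms (both equal $1$) forces $c=1$.

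\textbf{Paragraph 3 (Second identity and main obstacle).}

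The second identity follows by the same template. The Minkowski sum of $IT(M,N)$ and $IT(\tau(M,N))$ is the quadrilateral with vertices $(0,0)$, $(M+N,0)$, $(KN,K((K-1)N-M))$, $(M+N,KN)$; cutting along its diagonal from $(0,0)$ to $(KN,K((K-1)N-M))$ yields $K\cdot RT(\tau(M,N))$ plus an extra triangle $T_2$ of area $K/2$. The shared diagonal carries $K+1$ lattice points, all of which already lie in $K\cdot RT(\tau(M,N))$; the other two edges of $T_2$ have coprime directions by the same gcd analysis, and Pick's formula leaves $(M+N,KN)$ as the only lattice point of $T_2$ not in $K\cdot RT(\tau(M,N))$. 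A slope comparison (with the same surplus $1/(MN)$ coming from equation~(\ref{eqn-MN})) gives uniqueness of the decomposition of $(M+N,KN)$ as a sum of lattice points of $IT(M,N)$ and $IT(\tau(M,N))$, so the coefficient of $\xi^{int}_{M,N}\xi^{int}_{\tau(M,N)}$ at $(M+N,KN)$ is $\varepsilon^{int}_{M,N}$, cancelling with $\varepsilon^{int}_{M,N}x^{M+N}(y-1)^{KN}$ there. Therefore $Q:=\xi^{int}_{M,N}\xi^{int}_{\tau(M,N)}-\varepsilon^{int}_{M,N}x^{M+N}(y-1)^{KN}$ is supported in $K\cdot RT(\tau(M,N))$ and vanishes at $e$ to order $KN$. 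The main obstacle is now the corresponding uniqueness statement, which unlike in the first identity concerns a non-reduced divisor; I would handle it as follows. On a suitable blowup $X$, $Q$ has divisor class $K[\tilde{C}^{rat}_{\tau(M,N)}]$, where $\tilde{C}^{rat}_{\tau(M,N)}$ is the strict transform of the negative curve cut out by $\xi^{rat}_{\tau(M,N)}$, which has self-intersection $-1/K$. Thus $Q\cdot\tilde{C}^{rat}_{\tau(M,N)}=-1<0$, so $\tilde{C}^{rat}_{\tau(M,N)}$ must be a component of $Q$; repeating the argument on the residual divisor, which then has class $(K-1)[\tilde{C}^{rat}_{\tau(M,N)}]$, and proceeding inductively, $\tilde{C}^{rat}_{\tau(M,N)}$ must appear in $Q$ with multiplicity $K$, making $Q$ a scalar multiple of $(\xi^{rat}_{\tau(M,N)})^K$. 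Matching constant terms once more closes the identity.
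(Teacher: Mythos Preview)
Your proof is correct and takes a genuinely different route from the paper's.

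The paper argues as follows. For the first identity, it works on $X = \Bl_e X_{RT(M,N)}$ and considers the class $[D_0] = H' - (M+N)E$ from Section~5, which satisfies $D_0 \cdot C = 0$ for $C$ the negative curve cut out by $\xi^{rat}_{M,N}$. Since $\cO_X(D_0)|_C$ is trivial, any two global sections of $\cO_X(D_0)$ are proportional modulo $\xi^{rat}_{M,N}$. Two such sections are already in hand: $x^M(1-y)^{M+N}$ (from the case $\alpha = 0$) and $\xi^{int}_{M,N}$ (from the case $\beta = 0$). Writing
\[
\xi^{int}_{M,N} = \xi^{rat}_{M,N}\, g - \varepsilon^{rat}_{M,N}\, x^M(y-1)^{M+N},
\]
the quotient $g$ is forced to have constant term $1$, be supported in $RT(\tau(M,N))$, and vanish to order at least $N$ at $e$; hence $g = \xi^{rat}_{\tau(M,N)}$. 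The second identity is obtained symmetrically on $\Bl_e X_{IT(M,N)}$, with $(\xi^{rat}_{\tau(M,N)})^K$ and $x^{M+N}(1-y)^{KN}$ playing the roles of the two sections.

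Your approach instead computes the Minkowski sums explicitly, isolates the single stray lattice point, verifies its coefficient via a unique-decomposition argument, and then invokes uniqueness of the negative curve (together with the peeling-off trick for the $K$-th power). This is more elementary and self-contained --- it does not rely on the $D_0\cdot C = 0$ framework --- but it is longer and requires more bookkeeping. The paper's proof is shorter precisely because the congruence modulo $\xi$ comes for free from the geometry, and identifying the cofactor $g$ sidesteps all the lattice-point analysis. One small inaccuracy: in the second identity the slope surplus is $K/\big((M+N)(N+N')\big)$ rather than $1/(MN)$, though only its positivity is needed for your argument.
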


\begin{proof}
Consider the rational triangle $\Delta = RT(M,N)$, with $M>0$. We let $\alpha=\beta=0$ and consider the class $[D_0] = H'-(M+N) E$, where $H'$ corresponds to the triangle with base $[0,M]$. We know two divisors in the class $[D_0]$, defined by $x^{M}(1-y)^{M+N}$ from the case $\alpha=0$ and $\xi^{int}_{M,N}$ from the case $\beta=0$. These two polynomials must be constant multiples of each other modulo $\xi^{rat}_{M,N}$. Write
\[ \xi^{int}_{M,N} = \xi^{rat}_{M,N} g - \varepsilon^{rat}_{M,N} x^{M}(y-1)^{M+N}\]
for some $g$ with constant term $1$, supported in $RT(N,(K-1)N-(M+N)) = RT(\tau(M,N))$ and vanishing to order at least $N$ at $e$. There is only one such polynomial, $g=\xi^{rat}_{\tau(M,N)}$. 

Now consider the integral triangle $\Delta = IT(M,N)$, with $N>0$. Again let $\alpha=\beta=0$ and consider the class $[D_0] = H'-KN E$, where $H'$ corresponds to the triangle with base $[0,M+N]$. There are two divisors in the class $[D_0]$, defined by polynomials $x^{M+N}(1-y)^{KN}$ and $\big(\xi^{rat}_{\tau(M,N)} \big)^K$.  Write 
\[ \big(\xi^{rat}_{\tau(M,N)}\big)^K =  \xi^{int}_{M,N} g -  \varepsilon^{int}_{M,N} x^{M+N}(y-1)^{KN}\]
where $g$ is a polynomial with constant term $1$, supported in $IT(\tau(M,N))$, and vanishing to order at least  $KN-(M+N)$. Again, $g$ has to be equal to $\xi^{int}_{\tau(M,N)}$. 
\end{proof}

One can use the two formulas to compute the polynomials $\xi_{M,N}$ inductively for all $M>N$.  Assume that we know the coefficients $\varepsilon_{M,N}$.  Start with 
 \[ \xi^{int}_{1,0} = 1-x, \qquad \xi^{rat}_{1,0} = 1-xy.\]
 Then use the second equation to solve for $\xi^{int}_{M,N}$ from $\xi^{int}_{\tau(M,N)}, \xi^{rat}_{\tau(M,N)}$, and the first equation to find $\xi^{rat}_{M,N}$ from $\xi^{int}_{M,N}, \xi^{rat}_{\tau(M,N)}$.
 
 One can also determine the coefficients $\varepsilon_{M,N}$ explicitly in the case $M>N$. The two equations give us the relations
\begin{align*}
  \varepsilon^{int}_{M,N} &= \varepsilon^{rat}_{M,N} \varepsilon^{rat}_{\tau(M,N)},\\
   \big(\varepsilon^{rat}_{\tau(M,N)}\big)^K &= \varepsilon^{int}_{M,N} \varepsilon^{int}_{\tau(M,N)}.
\end{align*}
Starting with 
 \[ \varepsilon^{int}_{1,0} = \varepsilon^{rat}_{1,0} = -1,\]
one can compute these coefficients inductively. Let $(M_n,N_n) = \tau^{-n}(1,0)$. Then for $K$ even, $\varepsilon^{int}_{M_n,N_n} = -1$ and $\varepsilon^{rat}_{M_n,N_n} = (-1)^{n+1}$. When $K$ is odd then 
\[ \varepsilon^{int}_{M_n,N_n} =  \begin{cases}1 & \text{if $n\equiv 1 \mod 3$,}\\ -1 & \text{otherwise.} \end{cases} \qquad
\varepsilon^{rat}_{M_n,N_n} =  \begin{cases}1 & \text{if $n\equiv 2 \mod 3$,}\\ -1 & \text{otherwise.} \end{cases} \]

\subsection{Newton polygons of $\xi^{int}_{M,N}$ and $\xi^{rat}_{M,N}$.}

When proving in Section~\ref{section.proofs.Thm1.1.A.Thm1.2} that the negative curves in Theorem~\ref{thm-main-A} are pairwise not isomorphic, we used some facts about the Newton polygons of their defining equations. Let us prove these facts. 

\begin{lemma}\label{lem-IT}
  The Newton polygon of $\xi_{M,N}^{int}$ is $IT(M,N)$.
\end{lemma}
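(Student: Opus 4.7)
The plan is to verify that each of the three vertices of $IT(M,N)$, namely $(0,0)$, $(M+N,KN)$, and $(M,0)$, lies in the support of $\xi^{int}_{M,N}$. Since the Newton polygon is contained in $IT(M,N)$ by construction and is convex, exhibiting these three vertices in the support will force equality with the full triangle. The vertex $(0,0)$ is immediate from the normalization that $\xi^{int}_{M,N}$ has constant term $1$, and the vertex $(M+N,KN)$ has coefficient $\varepsilon^{int}_{M,N}=\pm 1$, which Subsection~\ref{sec-rec} has already shown to be nonzero.

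The real content is in showing that $(M,0)$ also lies in the support. For this I would invoke the first relation of Lemma~\ref{lem-relations}:
\[
\xi^{int}_{M,N} \;=\; \xi^{rat}_{M,N}\cdot\xi^{rat}_{\tau(M,N)} \;-\; \varepsilon^{rat}_{M,N}\,x^M(y-1)^{M+N}.
\]
The second summand contributes $(-1)^{M+N+1}\varepsilon^{rat}_{M,N}=\pm 1$ to the $x^M y^0$ coefficient of $\xi^{int}_{M,N}$, so it will suffice to show that the product $\xi^{rat}_{M,N}\cdot\xi^{rat}_{\tau(M,N)}$ contributes $0$ to that same coefficient.

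This last step is the main obstacle, and the strategy is a support analysis on the $y=0$ slice of the two rational triangles. The Newton polygon of $\xi^{rat}_{M,N}$ lies in $RT(M,N)$, whose base on the $x$-axis is the segment $[0,\,M-(M+N)/K]$; similarly that of $\xi^{rat}_{\tau(M,N)}=\xi^{rat}_{N,(K-2)N-M}$ lies in $RT(\tau(M,N))$, whose base on the $x$-axis is $[0,\,(M+N)/K]$. Consequently any monomial $x^ay^0$ appearing in the product obeys
\[
a \;\leq\; \lfloor M-(M+N)/K\rfloor \;+\; \lfloor (M+N)/K\rfloor.
\]
The key arithmetic input is that $(M+N)/K$ is not an integer: from $(M+N)^2=KMN+1$ we have $M+N\equiv\pm 1\pmod K$, so with $K\geq 3$ both $(M+N)/K$ and $M-(M+N)/K$ have strictly positive fractional part, and the sum of their floors equals $M-1<M$. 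Hence no $x^My^0$ term appears in the product, the $x^My^0$ coefficient of $\xi^{int}_{M,N}$ is $\pm 1$, and $(M,0)$ lies in the support. This exhibits all three vertices of $IT(M,N)$ in the Newton polygon, and convexity finishes the argument.
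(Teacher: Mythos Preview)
Your argument is correct and follows essentially the same route as the paper's proof: both reduce to showing the monomial $x^M$ appears in $\xi^{int}_{M,N}$, invoke the first relation of Lemma~\ref{lem-relations}, and observe that the product $\xi^{rat}_{M,N}\,\xi^{rat}_{\tau(M,N)}$ cannot contribute to $x^M$ because the bases of $RT(M,N)$ and $RT(\tau(M,N))$ sum to exactly $M$ with non-integral right endpoints. One small quibble: the claim $M+N\equiv\pm 1\pmod K$ does not follow from $(M+N)^2\equiv 1\pmod K$ alone when $K$ is composite, but you only use $K\nmid(M+N)$, and that is immediate since $K\mid(M+N)$ together with $(M+N)^2=KMN+1$ would force $K\mid 1$.
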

\begin{proof}
  By the discussion above it's enough to show that the monomial $x^M$ appears in $\xi_{M,N}^{int}$ with a nonzero coefficient. The bases of the triangles $RT(M,N)$ and $RT(\tau(M,N))$ add up exactly to $M$. These two triangles don't have a monomial in their bottom right vertices, so the product of these two polynomials cannot contain the monomial $x^M$. Then, by Lemma \ref{lem-relations}, the only contribution to this monomial in $\xi_{M,N}^{int}$ comes from $x^M(1-y)^{M+N}$.
\end{proof}

The Newton polygon of $\xi^{rat}_{M,N}$ may be smaller than the convex hull of lattice points in $RT(M,N)$. For example, the Newton polygon of $\xi^{rat}_{8,3}$ does not contain the point $(5,0)$.  We know that the Newton polygons contain the left edge of the corresponding triangle. We want to show that the two adjacent edges lie on the other two edges of the triangle.  Let us write the polynomials as 
\begin{align*}
\xi_{M,N}^{rat} &= 1+ a_{M,N}^{rat} x + \ldots + b_{M,N}^{rat} x^{M-1} y^{M+N-K} \pm  x^{M} y^{M+N},\\
\xi_{M,N}^{int} &= 1+ a_{M,N}^{int} x + \ldots + b_{M,N}^{int} x^{M+N-1} y^{NK-K} \pm  x^{M+N} y^{NK}.
\end{align*}

\begin{lemma}\label{lem-Newt}
Let $(M_n,N_n) = \tau^{-n}(1,0)$.
\begin{enumerate}
\item The coefficients $a_{M_n,N_n}^{int}$, $b_{M_n,N_n}^{int}$ are nonzero when $n>0$. They are not equal to $\pm 1$ when $n>1$.
\item The coefficients $a_{M_n,N_n}^{rat}$, $b_{M_n,N_n}^{rat}$ are nonzero when $n>1$. They are not equal to $\pm 1$ when $n>2$.
\end{enumerate}
\end{lemma}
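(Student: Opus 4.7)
The approach is to extract suitable coefficients from the two identities of Lemma~\ref{lem-relations} to produce linear recurrences in $n$ for each of $a^{int}_n, a^{rat}_n, b^{int}_n, b^{rat}_n$.  All four recurrences collapse to the Fibonacci-like relation $F_{n+2}=(K-2)F_{n+1}-F_n$ that already governs the pairs $(M_n,N_n)=\tau^{-n}(1,0)$, so the desired non-vanishing and growth-past-one statements follow from the monotonicity of $F_n$.

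First I would handle the coefficients of the monomial $x = x^1 y^0$.  A monomial $x^a(y-1)^b$ contributes to $x^1 y^0$ only when $a=1$, which never happens here since $M_{n+1}\geq K-2\geq 2$; so the error terms in Lemma~\ref{lem-relations} drop out.  Using $(\xi^{rat}_n)^K = 1 + K a^{rat}_n\,x + \cdots$, matching coefficients in both identities yields
\[ a^{int}_{n+1}+a^{int}_n = K\,a^{rat}_n, \qquad a^{rat}_{n+1}+a^{rat}_n = a^{int}_{n+1}. \]
With the initial data $a^{int}_0=-1$, $a^{rat}_0=0$ read off from $\xi^{int}_0=1-x$ and $\xi^{rat}_0=1-xy$, these integrate to $a^{rat}_n = F_n$ and $a^{int}_n = F_n + F_{n-1}$.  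Since $F_n$ is strictly positive and strictly increasing for $n\geq 1$ with $F_1 = 1 < F_2 = K-2 < F_3 = (K-2)^2 - 1$, the claims for $a^{int}_n$ and $a^{rat}_n$ are immediate.

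Next I would repeat the argument at the monomial one step down the slope-$K$ edge from the top vertex of $(\xi^{rat}_n)^K$, namely $(KM_n-1,\,K(M_n+N_n)-K)$.  The multinomial expansion gives $(\xi^{rat}_n)^K$ the coefficient $K(\varepsilon^{rat}_n)^{K-1}b^{rat}_n$ there, while $\xi^{int}_{n+1}\xi^{int}_n$ contributes $\varepsilon^{int}_{n+1}b^{int}_n + \varepsilon^{int}_n b^{int}_{n+1}$.  The correction term $x^{M_{n+1}+N_{n+1}}(y-1)^{KN_{n+1}}$ drops out for $n\geq 1$, because its $x$-exponent $(K-1)M_n-N_n$ equals $KM_n-1$ only when $N_n = M_n - 1$, i.e.\ at $n=0$.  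An identical analysis on the first identity of Lemma~\ref{lem-relations} yields $b^{int}_{n+1} = \varepsilon^{rat}_{n+1}b^{rat}_n + \varepsilon^{rat}_n b^{rat}_{n+1}$.  Setting $B^{?}_n := \varepsilon^{?}_n b^{?}_n$ and using the sign identities $\varepsilon^{int}_{n+1}=\varepsilon^{rat}_{n+1}\varepsilon^{rat}_n$ and $(\varepsilon^{rat}_n)^K = \varepsilon^{int}_{n+1}\varepsilon^{int}_n$, both recurrences collapse to
\[ B^{int}_{n+1} = B^{rat}_n + B^{rat}_{n+1}, \qquad B^{int}_{n+1} + B^{int}_n = K\,B^{rat}_n, \]
the very same system as before.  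Base values come from the explicit formula $\xi^{int}_1 = ((1-xy)^K + \varepsilon^{int}_1 x^{K-1}(y-1)^K)/(1-x)$: the coefficient at $(K-2,0)$ is $\pm 1$, giving $B^{int}_1 = \pm 1$, while the nominal position $(K-3,-1)$ for $b^{rat}_1$ lies outside the first quadrant and one takes $B^{rat}_1 := 0$.  Propagating, $|B^{int}_n|$ and $|B^{rat}_n|$ are positive integer linear combinations of consecutive $F_k$'s, again yielding the claimed growth.

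The main obstacle is the sign bookkeeping: $\varepsilon^{int}_n$ and $\varepsilon^{rat}_n$ follow a period-$2$ pattern when $K$ is even and a period-$3$ pattern when $K$ is odd, and one must verify that the substitution $B^{?}_n := \varepsilon^{?}_n b^{?}_n$ really produces the clean sign-free system above in both parities.  A smaller wrinkle is the degenerate base case $n=1$ for $b^{rat}$, where the formally prescribed monomial $x^{M-1}y^{M+N-K}$ falls outside the Newton polygon and must be handled as a zero initial condition; this is precisely why the claim for $b^{rat}_n$ starts at $n>1$ rather than $n>0$.
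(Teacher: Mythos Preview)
Your approach is essentially the paper's: extract the coefficient of $x$ (resp.\ of the monomial one step down the slope-$K$ edge from the top vertex) from the two identities in Lemma~\ref{lem-relations}, obtain a pair of coupled linear recurrences that collapse to $F_{n+2}=(K-2)F_{n+1}-F_n$, and read off the conclusions from the growth of $F_n$.  Your treatment of the $a$-coefficients matches the paper's proof exactly, with the same initial data and the same closed forms $a^{rat}_n=F_n$, $a^{int}_n=F_n+F_{n-1}$.

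The one substantive difference is in how you handle the signs for the $b$-coefficients.  The paper sidesteps the entire sign issue by renormalizing the polynomials so that the \emph{top} coefficient equals $1$; then the $b$-recurrence is literally identical in form to the $a$-recurrence (just with shifted initial data $b^{rat}_{M_1,N_1}=0$, $b^{int}_{M_1,N_1}=(-1)^{K-1}$), and one obtains $b^{rat}_{M_n,N_n}=(-1)^K F_{n-1}$, $b^{int}_{M_n,N_n}=(-1)^K(F_{n-1}+F_{n-2})$ for $n\geq 2$.  Your substitution $B^{?}_n=\varepsilon^{?}_n b^{?}_n$ is aiming at the same reduction, but as you yourself flag, the verification that the signs cancel correctly in both parities of $K$ is not carried out; the paper's renormalization trick removes this obstacle entirely.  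There is also a small algebraic slip in your argument that the correction term drops out: $(K-1)M_n-N_n=KM_n-1$ gives $M_n+N_n=1$, not $N_n=M_n-1$; your conclusion ``only at $n=0$'' is nonetheless correct.
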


\begin{proof}
Let us start with the coefficients $a_{M,N}^{rat}$ and $a_{M,N}^{int}$.  The linear terms of the equations in Lemma~\ref{lem-relations} give the relations
\begin{align*}
 a_{M,N}^{int} &= a_{M,N}^{rat} + a_{\tau(M,N)}^{rat},\\
 K a_{\tau(M,N)}^{rat} &= a_{M,N}^{int} + a_{\tau(M,N)}^{int}.
\end{align*}
Starting with $a_{1,0}^{int} = -1$ and $a_{1,0}^{rat} = 0$, one can inductively find all these coefficients.

The two equations   above can be written in the form
\begin{align*}
a_{M,N}^{rat} &= (K-1) a_{\tau(M,N)}^{rat} - a_{\tau(M,N)}^{int}, \\
 a_{M,N}^{int} &= K a_{\tau(M,N)}^{rat} - a_{\tau(M,N)}^{int}.
\end{align*}
Let us take new variables $x_n = a_{M_n,N_n}^{rat}$, $y_n = a_{M_n,N_n}^{int} - a_{M_n,N_n}^{rat}$, where $(M_n, N_n) = \tau^{-n}(1,0)$. Then the equations have the form
\begin{align*}
x_{ n+1} &= (K-2) x_n - y_n,\\
 y_{n+1} &= x_n,
\end{align*}
with initial condition $x_0=0$, $y_0 = -1$. This is the same recurrence relation that we saw above when computing $(M,N)$. The solutions are $x_n = F_n$, $y_n = F_{n-1}$. Going back to the original variables,
\[ a_{M_n,N_n}^{rat} = F_n, \quad a_{M_n,N_n}^{int} = F_n+F_{n-1}.\]
From $F_n>0$ when $n>0$ and $F_n>1$ when $n>1$ we get the statements about $a_{M_n,N_n}^{rat}$ and $a_{M_n,N_n}^{int}$.

For the coefficients $b_{M,N}^{rat}$ and $b_{M,N}^{int}$ we can use the same argument as above. It is best to normalize the polynomials $\xi^{int}_{M,N}$ and $\xi^{rat}_{M,N}$ so that their highest degree terms have coefficient $1$. Then $b_{M,N}^{rat}, b_{M,N}^{int}$ satisfy the same recurrence relations as $a_{M,N}^{rat}, a_{M,N}^{int}$, but with different initial values. We need to make sure that the last terms in the equations of Lemma~\ref{lem-relations} do not affect the relations. This is true when $(M,N) = (M_n,N_n)$ with $n\geq 2$. Thus, given  $b_{M_1,N_1}^{rat}$ and $b_{M_1,N_1}^{int}$, we can compute all these numbers for $n\geq 2$. We have $b_{M_1,N_1}^{rat} = 0$ because the right edge of the triangle contains only one lattice point. To find $b_{M_1,N_1}^{int}$, we apply the first equation in Lemma~\ref{lem-relations} in the case $(M,N) = (M_1,N_1)$, taking into account the last term of the equation. Since we normalized $\xi^{int}_{M,N}$ so that its highest degree coefficient is $1$, we get \[ b_{M_1,N_1}^{int} = - \varepsilon_{M_1,N_1}^{int} \varepsilon_{M_1,N_1}^{rat} (-1)^{M_1+N_1} = - \varepsilon_{M_0,N_0}^{rat} (-1)^{K-1} = (-1)^{K-1}.\]
The same computation as above now gives $b_{M_1,N_1}^{rat} = 0, b_{M_1,N_1}^{int} = (-1)^{K-1}$,
\[  b_{M_n,N_n}^{rat} = (-1)^K F_{n-1},  \quad b_{M_n,N_n}^{int} = (-1)^K ( F_{n-1} + F_{n-2}), \quad n\geq 2. \]
The statements about $b_{M_n,N_n}^{rat}$ and $b_{M_n,N_n}^{int}$ now follow from the properties of $F_n$.
\end{proof}

The previous lemma shows that the Newton polygon of  $\xi_{M_n,N_n}^{rat}$ when $n>1$ contains the left edge of the corresponding triangle, and in addition edges of nonzero length along the other two sides of the triangle. For $n=2$ we can compute the lattice length of the edge with slope $K$:

\begin{lemma} \label{lem-edge}
The Newton polygon of $\xi_{M_2,N_2}^{rat}$ has an edge of lattice length $K-3$ with vertices $(M_2,M_2+N_2)$ and $(M_2-(K-3) ,1)$.
\end{lemma}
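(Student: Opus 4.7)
First I would pin down the combinatorial data. With $(M_2,N_2)=\tau^{-2}(1,0)=((K-2)^2-1,\,K-2)$, the right edge of $RT(M_2,N_2)$ has slope $K$, starts at $(M_2,M_2+N_2)$, and its lattice points in $RT(M_2,N_2)$ are
\[(M_2-j,\,M_2+N_2-jK),\quad j=0,1,\dots,K-3,\]
because $M_2+N_2=K^2-3K+1$, so the last lattice point above the $x$-axis is $(M_2-(K-3),1)$, while the next step lands at $y=1-K<0$. The lattice length of the segment from $(M_2,M_2+N_2)$ to $(M_2-(K-3),1)$ is $\gcd(K-3,(K-3)K)=K-3$, so once I show both endpoints lie in the Newton polygon of $\xi^{rat}_{M_2,N_2}$, this segment must be an edge of the Newton polygon, since the right edge of $RT(M_2,N_2)$ is a supporting line of that polygon and no further lattice points on that edge lie above the $x$-axis.

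The top endpoint $(M_2,M_2+N_2)$ is present because $\xi^{rat}_{M_2,N_2}$ is normalized to have $\varepsilon^{rat}_{M_2,N_2}=\pm 1$. The work is in showing the coefficient $c$ of $x^{M_2-(K-3)}y$ in $\xi^{rat}_{M_2,N_2}$ is nonzero. I would use both relations from Lemma \ref{lem-relations} at $(M,N)=(M_2,N_2)$, using that $\tau(M_2,N_2)=(1,0)$, $\xi^{rat}_{1,0}=1-xy$, and $\xi^{int}_{1,0}=1-x$:
\begin{align*}
\xi^{int}_{M_2,N_2}&=\xi^{rat}_{M_2,N_2}(1-xy)-\varepsilon^{rat}_{M_2,N_2}\,x^{M_2}(y-1)^{M_2+N_2},\\
(1-xy)^K&=\xi^{int}_{M_2,N_2}(1-x)-\varepsilon^{int}_{M_2,N_2}\,x^{M_2+N_2}(y-1)^{K(K-2)}.
\end{align*}

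Next I would read off selected coefficients of $\xi^{int}_{M_2,N_2}$ by telescoping the second relation. The formula $[x^a y^b]\xi^{int}_{M_2,N_2}-[x^{a-1}y^b]\xi^{int}_{M_2,N_2}=[x^a y^b]\bigl((1-xy)^K+\varepsilon^{int}_{M_2,N_2}x^{M_2+N_2}(y-1)^{K(K-2)}\bigr)$ yields, for $0\le a<M_2+N_2$,
\[[x^a\,y^0]\xi^{int}_{M_2,N_2}=1,\qquad [x^a\,y^1]\xi^{int}_{M_2,N_2}=-K\quad (a\ge 1),\]
since the second summand only contributes at $a=M_2+N_2$. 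In particular the coefficients at $(M_2-(K-3),1)$ and $(M_2-(K-2),0)$ are $-K$ and $1$, respectively (using $K\ge 4$ to keep both $x$-exponents in the required range).

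Finally I would extract $c$ from the first relation by comparing the coefficients of $x^{M_2-(K-3)}y$ on both sides. The term $x^{M_2}(y-1)^{M_2+N_2}$ contributes $0$ since $K\ne 3$, and $\xi^{rat}_{M_2,N_2}(1-xy)$ contributes $c-d$, where $d=[x^{M_2-(K-2)}]\xi^{rat}_{M_2,N_2}$. Comparing $x^{M_2-(K-2)}y^0$ in the first relation similarly gives $d=[x^{M_2-(K-2)}]\xi^{int}_{M_2,N_2}=1$. Hence $c=-K+1=-(K-1)$, which is nonzero for $K\ge 4$, completing the proof. The only genuine obstacle is keeping the bookkeeping straight: verifying that the two auxiliary lattice points used in the comparison really lie inside the supporting triangle (so that the coefficient formulas from the telescoping step apply) and that the omitted terms from the last summands in both relations genuinely do not contribute at the targeted monomials.
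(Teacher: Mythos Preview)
Your reduction at the start is sound: the right edge of $RT(M_2,N_2)$ contains exactly the lattice points $(M_2-j,\,M_2+N_2-jK)$ for $j=0,\dots,K-3$, so it suffices to show that the coefficient of $x^{M_2-(K-3)}y$ in $\xi^{rat}_{M_2,N_2}$ is nonzero.

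The computation, however, rests on a genuine error. You assert $\tau(M_2,N_2)=(1,0)$, but in fact
\[
\tau(M_2,N_2)=\tau\bigl((K-2)^2-1,\,K-2\bigr)=\bigl(K-2,\,(K-2)^2-((K-2)^2-1)\bigr)=(K-2,1)=(M_1,N_1).
\]
So the relations from Lemma~\ref{lem-relations} at $(M_2,N_2)$ involve $\xi^{rat}_{K-2,1}$ and $\xi^{int}_{K-2,1}$, not $1-xy$ and $1-x$. Your telescoping identity $[x^ay^b]\xi^{int}_{M_2,N_2}-[x^{a-1}y^b]\xi^{int}_{M_2,N_2}=\cdots$ comes from dividing by $1-x$, which is not the correct factor, and the left side $(1-xy)^K$ is likewise wrong. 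The conclusions $[x^a y^0]\xi^{int}_{M_2,N_2}=1$ and $[x^a y^1]\xi^{int}_{M_2,N_2}=-K$ therefore have no justification, and the final value $c=-(K-1)$ is not established.

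Once you replace $(1,0)$ by $(K-2,1)$, the polynomials $\xi^{rat}_{K-2,1}$ and $\xi^{int}_{K-2,1}$ are no longer trivial, so a direct telescoping is not available. What the paper does is exactly the two-step bootstrap you are missing: it first applies the relations at $(M_1,N_1)$ (where $\tau(M_1,N_1)=(1,0)$, so one really can use $1-x$ and $1-xy$) to determine $\xi^{int}_{M_1,N_1}$ along the right edge, and only then feeds this into the relations at $(M_2,N_2)$. The paper streamlines this by working modulo all monomials off the right edge, in a single edge variable $z$, which keeps the intermediate computations to one-variable polynomial divisions. Your approach can be repaired along the same lines, but as written it collapses at the misidentified $\tau(M_2,N_2)$.
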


\begin{proof}
We compute $\xi_{M_2,N_2}^{rat}$ using the equations in Lemma~\ref{lem-relations}. Let us change scaling and coordinates so that the top vertex of each triangle corresponds to the constant term $1$ in all polynomials $\xi$, and the next lattice point on the edge with slope $K$ corresponds to monomial $z$. Then, working modulo all monomials that do not lie on right edge of the triangle, we find:
\[ \xi_{M_1,N_1}^{int}  \equiv \frac{(\xi_{M_0,N_0}^{rat})^K \pm x^{M_1+N_1}}{ \xi_{M_0,N_0}^{int}} \equiv \frac{1 \pm z} {1},\]
\[ \xi_{M_2,N_2}^{int}  \equiv \frac{(\xi_{M_1,N_1}^{rat})^K \pm x^{M_2+N_2}}{ \xi_{M_1,N_1}^{int}} \equiv \frac{1 \pm z^{K-1}} {1 \pm z} \equiv 1 \pm z \pm z^2 \pm \ldots \pm z^{K-2},\]
\[ \xi_{M_2,N_2}^{rat}  \equiv \frac{\xi_{M_2,N_2}^{int} \pm x^{M_2}}{\xi_{M_1,N_1}^{rat}} \equiv \frac{ \xi_{M_2,N_2}^{int} \pm z^{K-2}}{1} \equiv  1 \pm z \pm z^2 \pm \ldots \pm z^{K-3}.\]
Notice that in the numerator on the last line the term $z^{K-2}$ must cancel as it is not supported in the triangle.  
\end{proof}

\bibliographystyle{plain}
\bibliography{cox}

\begin{thebibliography}{10}

\bibitem{Castravet1}
Ana-Maria Castravet.
\newblock Mori dream spaces and blow-ups.
\newblock In Tommaso de~Fernex, Brendan Hassett, Mircea Mustaţă, Martin
  Olsson, Mihnea Popa, and Richard Thomas, editors, {\em Algebraic {Geometry}:
  {Salt} {Lake} {City} 2015}, volume 97.1 of {\em Proceedings of {Symposia} in
  {Pure} {Mathematics}}, pages 143--168. American Mathematical Society, 2018.

\bibitem{Wes}
Wesley Chorney.
\newblock Finding minimal enclosing triangles about {N}ewton polygons.
\newblock Bachelor's thesis, Simon Fraser University, 2018.
\newblock \url{https://github.com/NewtonPolyProject/NewtonPolyProject} Online;
  January 31st 2020.

\bibitem{Cowsik}
R.~C. Cowsik.
\newblock Symbolic powers and number of defining equations.
\newblock In {\em Algebra and its applications ({N}ew {D}elhi, 1981)},
  volume~91 of {\em Lecture Notes in Pure and Appl. Math.}, pages 13--14.
  Dekker, New York, 1984.

\bibitem{Cutkosky}
Steven~Dale Cutkosky.
\newblock Symbolic algebras of monomial primes.
\newblock {\em J. Reine Angew. Math.}, 416:71--89, 1991.

\bibitem{Dumnicki06}
Marcin Dumnicki.
\newblock Reduction method for linear systems of plane curves with base fat
  points.
\newblock {\em arXiv: math/0606716}, 2006.

\bibitem{GK}
Jos\'e~Luis Gonz\'alez and Kalle Karu.
\newblock Some non-finitely generated {C}ox rings.
\newblock {\em Compos. Math.}, 152(5):984--996, 2016.

\bibitem{GGK2}
Javier Gonz\'{a}lez~Anaya, Jos\'{e}~Luis Gonz\'{a}lez, and Kalle Karu.
\newblock Constructing non-{M}ori {D}ream {S}paces from negative curves.
\newblock {\em J. Algebra}, 539:118--137, 2019.

\bibitem{GGK1}
Javier Gonz\'{a}lez~Anaya, Jos\'{e}~Luis Gonz\'{a}lez, and Kalle Karu.
\newblock On a family of negative curves.
\newblock {\em J. Pure Appl. Algebra}, 223(11):4871--4887, 2019.

\bibitem{GNW}
Shiro Goto, Koji Nishida, and Keiichi Watanabe.
\newblock Non-{C}ohen-{M}acaulay symbolic blow-ups for space monomial curves
  and counterexamples to {C}owsik's question.
\newblock {\em Proc. Amer. Math. Soc.}, 120(2):383--392, 1994.

\bibitem{HKL}
J\"{u}rgen Hausen, Simon Keicher, and Antonio Laface.
\newblock On blowing up the weighted projective plane.
\newblock {\em Math. Z.}, 290(3-4):1339--1358, 2018.

\bibitem{He19}
Zhuang He.
\newblock Blow-{Ups} of {Toric} {Surfaces} and the {Mori} {Dream} {Space}
  {Property}.
\newblock {\em Experimental Mathematics}, pages 1--19, 2019.

\bibitem{Huneke}
Craig Huneke.
\newblock Hilbert functions and symbolic powers.
\newblock {\em Michigan Math. J.}, 34(2):293--318, 1987.

\bibitem{KuranoMatsuoka}
Kazuhiko Kurano and Naoyuki Matsuoka.
\newblock On finite generation of symbolic {R}ees rings of space monomial
  curves and existence of negative curves.
\newblock {\em J. Algebra}, 322(9):3268--3290, 2009.

\bibitem{KuranoNishida}
Kazuhiko Kurano and Koji Nishida.
\newblock Infinitely {Generated} {Symbolic} {Rees} {Rings} of {Space}
  {Monomial} {Curves} {Having} {Negative} {Curves}.
\newblock {\em The Michigan Mathematical Journal}, 68(2):409--445, June 2019.

\bibitem{Srinivasan}
Hema Srinivasan.
\newblock On finite generation of symbolic algebras of monomial primes.
\newblock {\em Comm. Algebra}, 19(9):2557--2564, 1991.

\end{thebibliography}

\end{document}